\documentclass[a4paper,11pt]{amsart}

\usepackage[breaklinks,bookmarks=false]{hyperref}
\hypersetup{colorlinks, linkcolor=blue, citecolor=blue,urlcolor=blue,plainpages=false}

\usepackage{a4wide}
\usepackage{mathabx}
\usepackage[foot]{amsaddr}

\usepackage{amssymb}
\usepackage{mathrsfs}

\renewcommand{\Re}{\operatorname{Re}}

\newcommand{\eq}{:=}


\newcommand{\grad}{\boldsymbol \nabla}
\renewcommand{\div}{\grad \cdot}

\newcommand{\ddiv}{\operatorname{div}}

\newcommand{\jmp}[1]{[\![#1]\!]}
\newcommand{\avg}[1]{\{\!\!\{#1\}\!\!\}}

\newcommand{\BA}{\boldsymbol A}

\newcommand{\BH}{\boldsymbol H}

\newcommand{\BL}{\boldsymbol L}

\newcommand{\BW}{\boldsymbol W}
\newcommand{\BX}{\boldsymbol X}



\newcommand{\bn}{\boldsymbol n}

\newcommand{\bv}{\boldsymbol v}
\newcommand{\bw}{\boldsymbol w}
\newcommand{\bx}{\boldsymbol x}
\newcommand{\by}{\boldsymbol y}


\newcommand{\CF}{\mathcal F}

\newcommand{\CI}{\mathcal I}
\newcommand{\CJ}{\mathcal J}

\newcommand{\CP}{\mathcal P}
\newcommand{\CQ}{\mathcal Q}

\newcommand{\CT}{\mathcal T}


\newcommand{\LP}{\mathscr P}
\newcommand{\LQ}{\mathscr Q}
\newcommand{\LR}{\mathscr R}



\newcommand{\TR}{\textup R}


\newcommand{\BCP}{\boldsymbol{\CP}}




\newcommand{\R}{\mathbb R}
\newcommand{\C}{\mathbb C}

\newcommand{\osc}{\operatorname{osc}}

\newcommand{\enorm}[1]{|\!|\!|#1|\!|\!|}

\newcommand{\bsig}{\boldsymbol \sigma}
\newcommand{\btau}{\boldsymbol \tau}

\newcommand{\LIFT}[1]{\boldsymbol{\ell}_h(\jmp{#1})}

\newcommand{\GRAD}{\mathfrak G}
\newcommand{\vel}{\vartheta}

\newcommand{\cgba}{\widecheck \gamma_{\rm ba}}

\newcommand{\cgbag}{\widecheck \gamma_{{\rm ba},{\rm g}}}
\newcommand{\cgbad}{\widecheck \gamma_{{\rm ba},{\rm d}}}

\newcommand{\tgba}{\widetilde \gamma_{\rm ba}}

\newcommand{\tgbag}{\widetilde \gamma_{{\rm ba},{\rm g}}}
\newcommand{\tgbad}{\widetilde \gamma_{{\rm ba},{\rm d}}}

\newcommand{\gba}{\gamma_{\rm ba}}
\newcommand{\gst}{\gamma_{\rm st}}

\newcommand{\gbag}{\gamma_{{\rm ba},{\rm g}}}
\newcommand{\gbad}{\gamma_{{\rm ba},{\rm d}}}

\newcommand{\pihg}{\pi_h^{\rm g}}
\newcommand{\pihd}{\pi_h^{\rm d}}

\newcommand{\pig}{\pi^{\rm g}}

\newcommand{\GD}{\Gamma_{\rm D}}
\newcommand{\GR}{\Gamma_{\rm R}}
\newcommand{\GN}{\Gamma_{\rm N}}

\newcommand{\bxi}{\boldsymbol \xi}

\newtheorem{theorem}{Theorem}
\newtheorem{lemma}[theorem]{Lemma}
\newtheorem{corollary}[theorem]{Corollary}
\newtheorem{remark}[theorem]{Remark}

\numberwithin{theorem}{section}
\numberwithin{equation}{section}

\title[IPDG under minimal regularity and application to Helmholtz problems]%
{Duality analysis of interior penalty discontinuous Galerkin methods under minimal regularity
and application to the a priori and a posteriori error analysis of Helmholtz problems}

\author{T. Chaumont-Frelet$^\star$}

\address{\vspace{-.5cm}}
\address{\noindent \tiny \textup{$^\star$Inria, Univ. Lille, CNRS, UMR 8524 -- Laboratoire Paul Painlev\'e}}

\newcommand{\revision}[1]{#1}
\newcommand{\blunder}[1]{#1}

\begin{document}

\begin{abstract}
We consider interior penalty discontinuous Galerkin discretizations of time-harmonic
wave propagation problems modeled by the Helmholtz equation, and derive novel a priori
and a posteriori estimates. Our analysis classically relies on duality arguments of Aubin--Nitsche
type, and its originality is that it applies under minimal regularity assumptions. The
estimates we obtain directly generalize known results for conforming discretizations,
namely that the discrete solution is optimal in a suitable energy norm and that the error
can be explicitly controlled by a posteriori estimators, provided the mesh is sufficiently fine.

\vspace{.5cm}
\noindent
{\sc Keywords.}
a priori error estimates, a posteriori error estimates, Aubin--Nitsche trick,
discontinuous Galerkin, Helmholtz problems, interior penalty, mininal regularity
\end{abstract}

\maketitle

\section{Introduction}

Discontinuous Galerkin methods are a popular approach to discretize PDE
boundary value problems \cite{dipietro_ern_2012a}. Similar to conforming finite element methods
\cite{ciarlet_2002a,ern_guermond_2019a}, they can handle general meshes, which allows to account for complicated
geometries. In addition, the use of discontinuous polynomial shape functions
facilitates the presence of hanging nodes and varying polynomial degree.
As a result, discontinuous Galerkin methods are especially suited for $hp$-adaptivity
\cite{congreve_gedicke_perugia_2019a,houston_schotzau_wihler_2006a}.
Furthermore, since all their degrees of freedom are attached with mesh cells,
discontinuous Galerkin methods allow linear memory access, which is crucial for
efficient computer implementations, in particular on GPUs
\cite{chan_wang_modave_remacle_warburton_2016a}.

In the context of wave propagation problems, discontinuous Galerkin methods
further display specific advantages. For time-dependent problems, the resulting
mass-matrix is block-diagonal \cite{agut_diaz_2013a,grote_schneebeli_schotzau_2006a},
which enables explicit time-stepping schemes without mass-lumping
\cite{cohen_joly_roberts_tordjman_2001a}. For time-harmonic wave propagation too,
discontinuous Galerkin formulations are attractive as they exhibit additional stability
as compared to conforming alternatives
\cite{bernkopf_sauter_torres_veit_2022a,feng_wu_2009a,feng_wu_2011a}.
These interesting properties have henceforth motivated a large number of works
considering discontinuous Galerkin discretizations of wave propagation problems,
and a non-exhaustive list includes \cite{agut_diaz_2013a,congreve_gedicke_perugia_2019a,%
du_zhu_2016a,feng_wu_2009a,feng_wu_2011a,grote_schneebeli_schotzau_2006a,hoppe_sharma_2013a,%
sauter_zech_2015a}.

In this work, we consider the acoustic Helmholtz equation, which is probably the
simplest model problem relevant to the difficulties of wave propagation.
Specifically, given a domain $\Omega \subset \R^d$, $d=2$ or $3$, and $f: \Omega \to \C$,
the unknown $u: \Omega \to \C$ should satisfy
\begin{equation}
\label{eq_helmholtz_strong}
\left \{
\begin{array}{rcll}
-\omega^2 \mu u-\div \left (\BA\grad u\right ) &=& f & \text{ in } \Omega,
\\
u &=& 0 & \text{ on } \GD,
\\
\BA\grad u \cdot \bn &=& 0 & \text{ on } \GN,
\\
\BA\grad u \cdot \bn - i\omega\gamma u &=& 0 & \text{ on } \GR,
\end{array}
\right .
\end{equation}
where $\overline{\GD} \cup \overline{\GN} \cup \overline{\GR} = \partial \Omega$
is a partition of the boundary and $\mu,\BA$ and $\gamma$ are given coefficients.
Precise assumptions are listed in Section \ref{section_setting}.

Our interest lies in interior penalty discontinuous Galerkin (IPDG) discretizations
of \eqref{eq_helmholtz_strong}. In particular, we focus on the ``minimal regularity''
case, where we do not assume any specific smoothness for the solution $u$. To the best
of our knowledge, this problem has not been considered in the literature, and available
works essentially assumes that the solution belongs to $H^{3/2+\varepsilon}$, so that the
traces of $\grad u$ are well-defined on mesh faces, see e.g. \cite[Eq. (4.5)]{hoppe_sharma_2013a}
and \cite[Lemma 2.6]{sauter_zech_2015a}. Unfortunately, such assumptions rule out important
configurations of coefficients and boundary conditions, which may bring the regularity of the
solution arbitrarily close to $H^1$ (see the appendix of \cite{costabel_dauge_nicaise_1999a}
for instance).

When considering conforming finite element discretizations of \eqref{eq_helmholtz_strong},
the so-called ``Schatz argument'' enables to show that the discrete solution $u_h$ is
quasi-optimal if the mesh is fine enough \cite{schatz_1974a}. Assuming for simplicity that
$\GR = \emptyset$ and introducing the energy norm
\begin{equation*}
\enorm{v}_{\omega,\Omega}^2
\eq
\omega^2 \|v\|_{\mu,\Omega}^2 + \|\grad v\|_{\BA,\Omega}^2
\quad
v \in H^1_{\GD}(\Omega),
\end{equation*}
we have
\begin{equation}
\label{eq_intro_apriori}
\enorm{u-u_h}_{\omega,\Omega}
\leq
\frac{1}{1-\gba^2} \min_{v_h \in V_h} \enorm{u-v_h}_{\omega,\Omega},
\end{equation}
whenever the approximation factor
\begin{equation}
\label{eq_intro_gba}
\gba \eq 2\omega \max_{\substack{\psi \in L^2(\Omega) \\ \|\psi\|_{\mu,\Omega} = 1}}
\min_{v_h \in V_h} \enorm{u_\psi-v}_{\omega,\Omega}
\end{equation}
is strictly less than one (all these notations are detailed in Section \ref{section_preliminary}
below). Above, $u_\psi$ solves \eqref{eq_helmholtz_strong}
with right-hand side $\mu\psi$ instead of $f$. Similarly, when considering a posteriori
error estimation \cite{chaumontfrelet_ern_vohralik_2021a}, we have
\begin{equation}
\label{eq_intro_aposteriori}
\enorm{u-u_h}_{\omega,\Omega} \leq \sqrt{1+\gba^2} \eta,
\end{equation}
where $\eta$ is a suitable a posteriori estimator.

Estimates similar to \eqref{eq_intro_apriori} and \eqref{eq_intro_aposteriori}
are available for IPDG discretizations \cite{melenk_parsania_sauter_2013a,sauter_zech_2015a},
but with energy norms involving the normal trace of the gradient on faces, thus essentially
requiring $H^{3/2+\varepsilon}$ regularity of $u_\psi$. Here, in contrast, we extend
\eqref{eq_intro_apriori} and \eqref{eq_intro_aposteriori} to IPDG discretizations without
additional regularity assumptions on the solutions $u_\psi$. As detailed below, our key
finding is that it can be achieved by redefining the approximation factor as
\begin{equation}
\label{eq_intro_gba_IPDG}
\gba^2
\eq
4\omega^2
\max_{\substack{\psi \in L^2(\Omega) \\ \|\psi\|_{\mu,\Omega} = 1}}
\left (
\min_{v_h \in V_h} \enorm{u_\psi-v_h}_{\revision{\dagger,1},\CT_h}^2
+
\min_{\bw_h \in \BW_h} \enorm{\BA\grad u_\psi-\bw_h}_{\revision{\dagger,\ddiv},\CT_h}^2
\right ),
\end{equation}
where $\BW_h$ is the BDM finite element space built using the same mesh and polynomial degree
than $V_h$, and $\enorm{{\cdot}}_{\revision{\dagger,1},\CT_h}$ and $\enorm{{\cdot}}_{\revision{\dagger,\ddiv},\CT_h}$
are $H^1_{\GD}(\Omega)$ and $\BH_{\GN}(\ddiv,\Omega)$ norms appropriately
scaled by the mesh size. The additional term in \eqref{eq_intro_gba_IPDG}
as compared to \eqref{eq_intro_gba} is necessary to account for the non-conformity
of the scheme.

Actually, the interest of the subject exceeds time-harmonic wave propagation.
In fact, the a priori and a posteriori error analysis of finite element discretizations
to \eqref{eq_helmholtz_strong} rely on duality arguments of Aubin-Nitsche type
\cite{schatz_1974a}. Such techniques are crucial in time-harmonic wave propagation
\cite{melenk_sauter_2010a,sauter_zech_2015a}, but there also useful in other contexts
to establish convergence in weak norms, see e.g.
\cite[Section 5.1]{arnold_brezzi_cockburn_marini_2002a}.
To the best of our knowledge, duality analysis for IPDG discretizations under
minimal regularity has not been addressed in the literature, and it is our goal
to do so here.


The remainder of this work is organized as follows. In Section \ref{section_preliminary},
we precise the setting and introduce key notations. Section \ref{section_duality}
presents the key argument that enables duality techniques for IPDG under minimal regularity.
Sections \ref{section_apriori} and \ref{section_aposteriori} then employ the aforementioned
reasoning to perform the a priori and a posteriori error analysis of IPDG discretization for
Helmholtz problems.
\revision{Finally, for the sake of completeness, we provide an estimate
for the approximation factor $\gba$ in Appendix \ref{appendix_approximation_factors}.}

\section{Setting and preliminary results}
\label{section_preliminary}

\subsection{Setting}
\label{section_setting}

Throughout this work, $\Omega \subset \R^d$, with $d=2$ or $3$, is a Lipschitz
polytopal domain. The boundary of $\Omega$ is partitioned into three open, Lipschitz
and disjoint polytopal subsets $\GD$, $\GN$ and $\GR$ such that
$\partial \Omega = \overline{\GD} \cup \overline{\GN} \cup \overline{\GR}$.
We employ the notation $\bn$ for the unit vector normal to $\partial \Omega$
pointing outside $\Omega$.

We consider coefficients $\mu: \Omega \to \R$, $\BA: \Omega \to \R^{d \times d}$
and $\gamma: \GR \to \R$ satisfying the following properties. We assume that there
exists a partition $\LP$ of $\Omega$ into a finite number disjoint open polytopal
subdomains such that $\mu|_P = \mu_P \in \R$ and $\BA|_P = \BA_P \in \R^{d \times d}$ take constant
values for all $P \in \LP$. Similarly, there exists a finite partition $\LQ$ of $\GR$
consisting of open polytopal subsets such that $\gamma|_Q = \gamma_Q \in \R$ is constant
for each $Q \in \LQ$. For each $P \in \LP$, we assume that $\BA_P$ is symmetric, and let
$\alpha_P \eq \min_{\bxi \in \R^d; |\bxi| = 1} \BA_P \bxi \cdot \bxi$. We then classically
require that $\min_{P \in \LP} \mu_P > 0$, $\min_{P \in \LP} \alpha_P > 0$, and
$\min_{Q \in \LQ} \gamma_Q > 0$.

We also fix a real number $\omega > 0$ representing the (angular) frequency.

\subsection{Functional spaces}

If $D \subset \mathbb R^d$ is an open set, we denote by $L^2(D)$ the Lebesgue space
of complex-valued square integrable functions defined over $D$, and we set
$\BL^2(D) \eq [L^2(D)]^d$ for vector-valued functions. The notations $(\cdot,\cdot)_D$
and $\|{\cdot}\|_D$ then stand for the usual inner-product and norm of $L^2(D)$ or $\BL^2(D)$.
In addition, if $w: D \to \mathbb R$ is a measurable function satisfying
$0 < \operatorname{ess} \inf_D w$ and $\operatorname{ess} \sup_D w < +\infty$,
then $\|{\cdot}\|_{w,D}^2 \eq (w \cdot,\cdot)_D$ defines a norm on $L^2(D)$ equivalent
to the standard one. We use the same notation in $\BL^2(D)$ with matrix-valued weights.
Besides, we employ similar notations for $d-1$ manifolds.

$H^1(D)$ is the Sobolev space of functions $v \in L^2(D)$ such that $\grad v \in \BL^2(D)$,
where $\grad$ denotes the weak gradient defined in the sense of distributions. If
$\Gamma \subset \partial D$ is a relatively open subset of the boundary of $D$, then
$H^1_\Gamma(D)$ stands for the subset of functions $v \in H^1(D)$ such that
$v|_{\Gamma} = 0$ in the sense of traces. We refer the reader to \cite{adams_fournier_2003a}
for a detailed description of the above spaces. On $H^1_{\GD}(\Omega)$, we will often employ
the following ``energy'' norm:
\begin{equation}
\label{eq_energy_norm}
\enorm{v}_{\omega,\Omega}^2
\eq
\omega^2\|v\|_{\mu,\Omega}^2 + \omega\|v\|_{\gamma,\GR}^2 + \|\grad v\|_{\BA,\Omega}^2
\quad
\forall v \in H^1_{\GD}(\Omega).
\end{equation}

It turns out that Sobolev spaces of vector-valued functions will be key in the duality
analysis we are about to perform. Specifically, we will need the space $\BH(\ddiv,D)$
of functions $\bv \in \BL^2(D)$ such that $\div \bv \in L^2(D)$ where $\div$ is the weak
divergence operator \cite{girault_raviart_1986a}. Following, e.g., \cite{fernandes_gilardi_1997a},
if $\Gamma \subset \partial D$ is a relatively open set, the normal trace $(\bw \cdot \bn)|_\Gamma$
of $\bw \in \BH(\ddiv,D)$ can be defined in a weak sense, and $\BH_\Gamma(\ddiv,D)$
will stand for the space of $\bw \in \BH(\ddiv,D)$ such that $(\bw \cdot \bn)|_\Gamma = 0$.

We are studying ``Robin-type'' boundary conditions which are not naturally handled in
the $\BH(\ddiv)$ setting. We follow the standard remedy \cite{chaumontfrelet_2019a,monk_2003a}
and introduce the space
\begin{equation*}
\BX(\ddiv,\Omega)
\eq
\left \{
\bw \in \BH(\ddiv,\Omega) \; | \; (\bw \cdot \bn)|_{\GR} \in L^2(\GR)
\right \},
\end{equation*}
where additional normal trace regularity is enforced on the Robin boundary.
The notation $\BX_{\GN}(\ddiv,\Omega) \eq \BX(\ddiv,\Omega) \cap \BH_{\GN}(\ddiv,\Omega)$
will also be useful.

\subsection{Helmholtz problem}

Central to our considerations will be the sesquilinear form
\begin{equation}
\label{eq_b}
b(\phi,v)
\eq
-\omega^2 (\mu\phi,v)_\Omega
-i\omega(\gamma\phi,v)_{\GR}
+(\BA\grad \phi,\grad v)_\Omega
\qquad
\forall \phi,v \in H^1_{\GD}(\Omega),
\end{equation}
corresponding to the weak formulation of \eqref{eq_helmholtz_strong}.
We will assume throughout this work that the considered Helmholtz
problem is well-posed, meaning that there exists $\gst > 0$ such that
\begin{equation}
\label{eq_well_posed}
\min_{\substack{\phi \in H^1_{\GD}(\Omega) \\ \enorm{\phi}_{\omega,\Omega} = 1}}
\max_{\substack{ v   \in H^1_{\GD}(\Omega) \\ \enorm{ v  }_{\omega,\Omega} = 1}}
\Re b(\phi,v)
=
\frac{1}{\gst}.
\end{equation}
In our setting, \eqref{eq_well_posed} always holds as soon as $\GR$ has a positive
measure due to the unique continuation principle. On the other hand, if $\GR = \emptyset$,
then \eqref{eq_well_posed} fails to hold if and only if $\omega^2$ is an eigenvalue of the
resulting self-adjoint operator. In general, it is hard to quantitatively estimate $\gst$, but a
reasonable assumption is that it grows polynomially with the frequency
\cite{lafontaine_spence_wunsch_2021a}. It is also worth mentioning that the
constant may be explicitly controlled in some specific configurations
\cite{%
barucq_chaumontfrelet_gout_2017a,%
chandlerwilde_spence_gibbs_smyshlyaev_2020a,%
chaumontfrelet_spence_2021a,%
moiola_spence_2019a}.

In the remainder of this work, we fix a right-hand side $f \in L^2(\Omega)$,
and let $u \in H^1_{\GD}(\Omega)$ be the unique element satisfying
\begin{equation}
\label{eq_helmholtz_weak}
b(u,v) = (f,v)_\Omega \quad \forall v \in H^1_{\GD}(\Omega).
\end{equation}


\subsection{Computational mesh}

We consider a mesh $\CT_h$ of $\Omega$ consisting of non-overlapping (closed) simplicial
elements $K$. We classically assume that $\CT_h$ is a matching mesh meaning that
the intersection of two distinct elements either is empty, or it is a full face,
edge or vertex of the two elements (see, e.g. \cite[Section 2.2]{ciarlet_2002a} or
\cite[Definition 6.11]{ern_guermond_2019a}).
The set of faces of the mesh is denoted by $\CF_h$. We also employ the standard notations
$h_K$ and $\rho_K$ for the diameter of $K \in \CT_h$ and the diameter of the largest ball
contained in $K$ (see \cite[Theorem 3.1.3]{ciarlet_2002a} or
\cite[Definition 6.4]{ern_guermond_2019a}).
Then, $\kappa_K \eq h_K/\rho_K$ is the shape-regularity parameter of $K \in \CT_h$, and
$\kappa \eq \max_{K \in \CT_h} \kappa_K$. We also introduce the global mesh size
$h \eq \max_{K \in \CT_h} h_K$. Similarly, $h_F$ stands for the diameter of the face
$F \in \CF_h$.

We further assume that the mesh $\CT_h$ conforms with the partition of the boundary
and coefficients in the sense that, for each $K \in \CT_h$, there exists a (unique)
$P \in \LP$ such that $K \subset \overline{P}$ and for each $F \in \CF_h$ with
$F \subset \partial \Omega$, we have either $F \subset \overline{\GD}$,
$F \subset \overline{\GN}$ or $F \subset \overline{\GR}$.
If $F \subset \overline{\GR}$, we additionally require that $F \subset \overline{Q}$
for some (unique) $Q \in \LQ$. We respectively denote by $\CF_h^{\rm D}$,
$\CF_h^{\rm N}$ and $\CF_h^{\rm R}$ the set of faces $F \in \CF_h$ such that
$F \subset \overline{\GD}$, $\overline{\GN}$ or $\overline{\GR}$. We also set
$\CF_h^{\rm e} \eq \CF_h^{\rm D} \cup \CF_h^{\rm N} \cup \CF_h^{\rm R}$,
and $\CF^{\rm i} \eq \CF_h \setminus \CF_h^{\rm e}$.
For $K \in \CT_h$, we can then set $\mu_K \eq \mu_P$, $\alpha_K \eq \alpha_P$, and
$\vel_K \eq \sqrt{\alpha_K/\mu_K}$, where $P \in \LP$ contains $K$.  Similarly, if
$F \in \CF_h^{\rm e}$, we set $\alpha_F \eq \alpha_K$ where $K \in \CT_h$ is the
only element having $F$ as a face. If $F \in \CF_h^{\rm R}$, we also introduce
$\gamma_F \eq \gamma_Q$ and $\vel_F \eq \alpha_F/\gamma_F$ where $Q$ is the unique
subset in $\LQ$ containing $F$.

Finally, we associate with each $F \in \CF_h$ a unit normal vector $\bn_F$. If
$F \in \CF_h^{\rm e}$, we require that $\bn_F = \bn$. Otherwise, the orientation
of $\bn_F$ is arbitrary, but fixed.

\revision
{
\begin{remark}[Hanging nodes]
For the sake of simplicity, our assumptions on the mesh rule out the presence
of hanging nodes. However, this is not essential for the analysis performed in
the manuscript. Instead, the key assumption required is that the conforming Lagrange
and Raviart-Thomas finite element spaces associated with the mesh are sufficiently rich,
which is known to be the case in a variety of situations, see e.g.
\cite{ainsworth_pinchedez_2002a,bonito_nochetto_2010a}. Since the precise assumptions
are intricate, we have chosen to restrict our attention to matching meshes to ease the
presentation. The interested reader will find more details in Appendix
\ref{appendix_approximation_factors} below. Related to this comment,
notice that the present analysis would be harder to extend to polytopal meshes.
\end{remark}
}

\subsection{Polynomial spaces}

In the remainder of this work, we fix a polynomial degree $p \geq 1$.
For $K \in \CT_h$, $\CP_p(K)$ is the set of polynomial functions $K \to \C$
of total degree less than or equal to $q$. For vector-valued functions, we also set
$\BCP_p(K) \eq [\CP_p(K)]^3$. If $\CT \subset \CT_h$ is a collection of elements
covering the (open) domain $\Theta$, then
$\CP_p(K) \eq \{ v \in L^2(\Theta); \; v|_K \in \CP_p(K) \; \forall K \in \CT\}$
and $\BCP_p(\CT) \eq [\CP_p(\CT)]^3$.

\subsection{Broken Sobolev spaces}

We define $H^1(\CT_h)$ as the
subset of functions $v \in L^2(\Omega)$ such that $v|_K \in H^1(K)$ for all $K \in \CT_h$.
For functions in $H^1(\CT_h)$, we still employ the notations $\grad$ for the element-wise
weak gradient, so that $\grad (H^1(\CT_h)) \subset \BL^2(\Omega)$. To avoid confusions, we
will employ the alternative notations $(\cdot,\cdot)_{\CT_h}$ and $\|{\cdot}\|_{\CT_h}$ for
the $L^2(\Omega)$ and $\BL^2(\Omega)$ norms and inner-products when working with broken functions
(we also employ the same notation for weighted norms). Similarly, we write
\begin{equation*}
\|v\|_{\gamma,\CF_h^{\rm R}}^2 \eq \sum_{F \in \CF_h^{\rm R}} \|v\|_{\gamma,F}^2
\quad
\forall v \in L^2(\GR).
\end{equation*}
The notation
\begin{equation*}
(\phi,v)_{\CF_h} \eq \sum_{F \in \CF_h} (\phi,v)_F
\qquad
\forall \phi,v \in \bigoplus_{F \in \CF_h} L^2(F)
\end{equation*}
will also be useful.

\subsection{Jumps, averages, lifting and discrete gradient}

Considering $\phi \in H^1(\CT_h)$, its jump through a face $F \in \CF_h^{\rm i}$
is defined by
\begin{equation*}
\jmp{\phi}_F \eq \phi_+|_F \bn_+ \cdot \bn_F + \phi_-|_F \bn_- \cdot \bn_F
\end{equation*}
with $K_\pm \in \CT_h$ the two elements such that $F = \partial K_+ \cap \partial K_-$,
$\phi|_\pm \eq \phi|_{K_\pm}$ and $\bn_\pm \eq \bn_{K_\pm}$.
For an exterior face $F \in \CF_h^{\rm e}$, we set instead
\begin{equation*}
\jmp{\phi}_F \eq \phi|_F \text{ if } F \in \CF_h^{\rm D}
\quad
\text{ and }
\quad
\jmp{\phi}_F \eq 0 \text{ otherwise.}
\end{equation*}
Similarly, if $\bw \in \BCP_p(\CT_h)$, its average on $F \in \CF_h$ is given by
\begin{equation*}
\avg{\bw}_F \eq \frac{1}{2} (\bw_+|_F + \bw_-|_F) \quad \text{ if } F \in \CF_h^{\rm i}
\quad \text{ and } \quad
\avg{\bw}_F \eq \bw|_F \quad \text{ if } F \in \CF_h^{\rm e},
\end{equation*}
where $\bw_\pm \eq \bw|_{K_\pm}$ for the two elements $K_\pm \in \CT_h$ such that
$F = K_- \cap K_+$ in the case of an interior face.

A key part of our analysis will be to give a meaning to the terms
$(\jmp{\phi},\avg{\BA\grad v} \cdot \bn_F)_{\CF_h}$ appearing in the
IPDG form, for functions $\phi$ and $v$ only belonging to $H^1(\CT_h)$.
This is subtle, since the normal trace of $\grad v$ is actually not
defined on faces. Following \cite[Section 4.3]{dipietro_ern_2012a},
the solution is to introduce a lifting operator defined as follows.

For $\phi \in H^1(\CT_h)$ we define the lifting $\LIFT{\phi}$ as the
unique element of $\BCP_p(\CT_h)$ such that
\begin{equation*}
(\LIFT{\phi},\bw_h)_{\CT_h}
=
(\jmp{\phi}_F,\avg{\bw_h} \cdot \bn_F)_{\CF_h}
\qquad
\forall \bw_h \in \BCP_p(\CT_h).
\end{equation*}
Notice that we can then write $(\BA\LIFT{\phi},\grad v)_{\CT_h}$
instead of $(\jmp{\phi},\avg{\BA\grad v}\cdot\bn_F)_{\CF_h}$
for functions $\phi, v \in \CP_p(\CT_h)$, with the advantage that
the second expression is still well-defined for general $H^1(\CT_h)$ arguments.

The notion of weak gradient will also be useful. Specifically, we set
\begin{equation*}
\GRAD(\phi) \eq \grad \phi - \LIFT{\phi} \in \BL^2(\CT_h)
\end{equation*}
for all $\phi \in H^1(\CT_h)$. Importantly, we have $\GRAD(\phi) = \grad \phi$
whenever $\phi \in H^1_{\GD}(\Omega)$, and
\begin{equation*}
(\GRAD(\phi),\bw_h)_{\CT_h} + (\phi,\div \bw_h)_{\CT_h} = (\phi,\bw_h \cdot \bn)_{\GR}
\end{equation*}
for all $\phi \in H^1(\CT_h)$ and $\bw_h \in \BCP_p(\CT_h) \cap \BX_{\GN}(\revision{\ddiv,\Omega})$.

\subsection{Broken norms}

For $v \in H^1(\CT_h)$, the broken counterpart of the energy norm introduced in
\eqref{eq_energy_norm} is defined by
\begin{equation*}
\enorm{v}^2_{\omega,\CT_h}
\eq
\omega^2 \|v\|_{\mu,\CT_h}^2
+
\omega \|v\|_{\gamma,\CF_h^{\rm R}}^2
+
\|\GRAD(v)\|_{\BA,\CT_h}^2.
\end{equation*}
Notice that it is equal to the $\enorm{{\cdot}}_{\omega,\Omega}$ norm if
$v \in H^1_{\GD}(\Omega)$. We will also employ the mesh-dependent norms
\begin{equation*}
\|v\|_{\dagger,1,\CT_h}^2
\eq
\sum_{K \in \CT_h} \left \{
\max \left (
1,\frac{\omega^2 h_K^2}{\vel_K^2}
\right )
\frac{\alpha_K}{h_K^2}\|v\|_K^2
+
\|\GRAD(v)\|_{\BA,K}^2
\right \}
+
\sum_{F \in \CF_h^{\rm R}}
\max \left (1,\frac{\omega h_F}{\vel_F}\right )
\frac{\alpha_F}{h_F} \|v\|_F^2
\end{equation*}
and
\begin{equation*}
\|\bw\|_{\dagger,\ddiv,\CT_h}^2
\eq
\sum_{K \in \CT_h} \left \{
\|\bw\|_{\BA^{-1},K}^2
+
\frac{h_K^2}{\alpha_K} \|\div \bw\|_{K}^2
\right \}
+
\sum_{F \in \CF_h^{\rm R}} \frac{h_F}{\alpha_F} \|\bw \cdot \bn\|_F^2
\end{equation*}
for $\bw \in \BX_{\GN}(\ddiv,\Omega)$. These norms are ``dual'' to each other in the sense that
\begin{equation}
\label{eq_norm_dual}
|(\GRAD(\phi),\bw)_{\CT_h} + (\phi,\div \bw)_{\CT_h} - (\phi,\bw\revision{\cdot \bn})_{\CF_h^{\rm R}}|
\leq
\|\phi\|_{\dagger,1,\CT_h}\|\bw\|_{\dagger,\ddiv,\CT_h}
\end{equation}
for all $\phi \in H^1(\CT_h)$ and $\bw \in \BX_{\GN}(\ddiv,\Omega)$.
For future references, we notice that the ``$\max$'' in the definition of the
$\|{\cdot}\|_{\dagger,1,\CT_h}$ are chosen so that
\begin{equation}
\label{eq_norm_control}
\enorm{v}_{\omega,\CT_h} \leq \|v\|_{\dagger,1,\CT_h} \qquad \forall v \in H^1(\CT_h).
\end{equation}

\subsection{IPDG form}

Following \cite[Section 4.3.3]{dipietro_ern_2012a}, our DG approximation of the
$(\BA\grad\cdot,\grad\cdot)_\Omega$ form is given by
\begin{equation}
\label{eq_IPDG_discrete_gradient}
a_h(\phi,v)
\eq
(\BA\GRAD(\phi),\GRAD(v))_{\CT_h} + s_h(\phi,v)
\qquad
\forall \phi,v \in H^1(\CT_h)
\end{equation}
where $s_h: H^1(\CT_h) \times H^1(\CT_h) \to \C$ is a sesqulinear form
satisfying $s_h(\phi,v) = 0$ whenever $\phi \in H^1_{\revision{\GD}}(\Omega)$ or
$v \in H^1_{\revision{\GD}}(\Omega)$. Importantly, we have
\begin{equation}
\label{eq_consistency}
a_h(\phi,v) = (\BA\grad\phi,\grad v)_\Omega
\end{equation}
when $\phi,v \in H^1_{\GD}(\Omega)$.

It is worthy to note that the IPDG form is usually not presented (and not implemented)
as it is presented in \eqref{eq_IPDG_discrete_gradient}. Instead
\cite{agut_diaz_2013a,feng_wu_2009a,grote_schneebeli_schotzau_2006a,sauter_zech_2015a},
the method is usually written as
\begin{equation}
\label{eq_IPDG_jump}
a_h(\phi,v)
=
(\BA\grad\phi,\grad v)_{\CT_h}
-
(\avg{\BA\grad \phi}\cdot \bn_F,\jmp{v})_{\CF_h}
-
(\jmp{\phi},\avg{\BA\grad v}\cdot \bn_F)_{\CF_h}
+
\sum_{F \in \CF_h^{\rm i} \cup \CF_h^{\rm D}} \frac{\beta_F}{h_F}(\jmp{\phi},\jmp{v})_F
\end{equation}
where $\beta_F \sim p^2$ is a penalty parameter chosen to be large enough
\cite{agut_diaz_2013a}.
However, as shown in \cite[Section 4.3.3]{dipietro_ern_2012a},
the formulation of \eqref{eq_IPDG_jump} can be recast in the framework of
\eqref{eq_IPDG_discrete_gradient} by setting
\begin{equation}
\label{eq_penalization_usual}
s_h(\phi,v)
\eq
\sum_{F \in \CF_h^{\rm i} \cup \CF_h^{\rm D}} \frac{\beta_F}{h_F}(\jmp{\phi},\jmp{v})_F
-(\BA\LIFT{\phi},\LIFT{v})_\Omega.
\end{equation}

\subsection{The discrete Helmholtz problem}

Consider the sesquilinear form 
\begin{equation*}
b_h(\phi,v)
\eq
-\omega^2 (\mu\phi,v)_{\CT_h}
-i\omega (\gamma\phi,v)_{\CF_h^{\rm R}}
+a_h(\phi,v)
\qquad
\forall \phi,v \in H^1(\CT_h).
\end{equation*}
Then, the discrete problem consists in finding $u_h \in \CP_p(\CT_h)$ such that
\begin{equation}
\label{eq_helmholtz_ipdg}
b_h(u_h,v_h) = (f,v_h)_{\CT_h} \qquad \forall v_h \in \CP_p(\CT_h).
\end{equation}
For any $u_h$ satisfying \eqref{eq_helmholtz_ipdg}, recalling \eqref{eq_helmholtz_weak}
and due to \eqref{eq_consistency}, the Galerkin orthogonality property
\begin{equation}
\label{eq_galerkin_orthogonality}
b_h(u-u_h,w_h) = 0
\end{equation}
holds true for all discrete conforming test functions $w_h \in \CP_p(\CT_h) \cap H^1_{\GD}(\Omega)$.
Similarly, we have
\begin{equation}
\label{eq_continuity}
|b_h(\phi,v)|
\leq
\enorm{\phi}_{\omega,\CT_h}\enorm{v}_{\omega,\CT_h}
\end{equation}
for $\phi,v \in H^1(\CT_h)$ whenever $\phi$ or $v$ belongs to $H^1_{\GD}(\Omega)$.


\subsection{Conforming subspaces and projections}

The sets $\CP_p(\CT_h) \cap H^1_{\GD}(\Omega)$ and $\BCP_p(\CT_h) \cap \BX_{\GN}(\ddiv,\Omega)$
are the usual Lagrange and BDM finite element spaces (see, e.g.,
\cite[Sections 8.5.1 and 11.5]{ern_guermond_2019a}). Although they are not needed to implement
the IPDG discretization, they will be extremely useful for the analysis.
The projections
\begin{equation*}
\pihg v
\eq
\arg \min_{v_h \in \CP_p(\CT_h) \cap H^1_{\GD}(\Omega)}
\|v-v_h\|_{\revision{\dagger,1},\CT_h}
\end{equation*}
and
\begin{equation*}
\pihd \bw
\eq
\arg \min_{\bw_h \in \BCP_p(\CT_h) \cap \BX_{\GN}(\ddiv,\Omega)}
\|\bw-\bw_h\|_{\revision{\dagger,\ddiv},\CT_h}
\end{equation*}
are well-defined for all $v \in H^1_{\GD}(\Omega)$ and $\bw \in \BX_{\GN}(\ddiv,\Omega)$,
since the mesh-dependent norms appearing in the right-hand sides are naturally associated
with inner-product. We will also need the conforming projector
\begin{equation*}
\pig v \eq \arg \min_{s \in H^1_{\GD}(\Omega)} \|v-s\|_{\revision{\dagger,1},\CT_h}
\end{equation*}
defined for all $v \in H^1(\CT_h)$.

\subsection{Approximation factors}

\begin{subequations}
\label{eq_definition_gba}
Following
\cite{chaumontfrelet_ern_vohralik_2021a,
melenk_parsania_sauter_2013a,
melenk_sauter_2010a,
sauter_zech_2015a}, our analysis will rely on duality arguments and
approximation factors will be a central concept.
For $\psi \in L^2(\Omega)$ and $\Psi \in L^2(\GR)$,
let $u^\star_\psi,U^\star_\Psi \in H^1_{\revision{\GD}}(\Omega)$ solve
\begin{equation*}
b(w,u^\star_\psi) = \omega(\mu w,\psi)_\Omega,
\qquad
b(w,U^\star_\Psi) = \omega^{1/2}(\gamma w,\Psi)_{\GR}
\qquad
\forall w \in H^1_{\revision{\GD}}(\Omega).
\end{equation*}
The approximation factors
\begin{equation}
\label{eq_definition_gbag}
\cgbag
\eq
\max_{\substack{\psi \in L^2(\Omega) \\ \|\psi\|_{\mu,\Omega} = 1}}
\|u_\psi^\star-\pihg u_\psi^\star\|_{\dagger,1,\CT_h},
\quad
\tgbag
\eq
\max_{\substack{\Psi \in L^2(\GR) \\ \|\Psi\|_{\gamma,\GR} = 1}}
\|U_\Psi^\star-\pihg U_\Psi^\star\|_{\dagger,1,\CT_h},
\end{equation}
where previously employed in \cite{chaumontfrelet_ern_vohralik_2021a}
for the a posteriori error analysis of conforming discretizations.
Here, we will additionally need divergence-conforming approximation factors
\begin{equation}
\label{eq_definition_gbad}
\begin{aligned}
\cgbad &\eq \max_{\substack{\psi \in L^2(\Omega) \\ \|\psi\|_{\mu,\Omega} = 1}}
\|\BA\grad u_\psi^\star-\pihd (\BA\grad u_\psi^\star)\|_{\dagger,\ddiv,\CT_h},
\\
\tgbad &\eq \max_{\substack{\Psi \in L^2(\GR) \\ \|\Psi\|_{\gamma,\GR} = 1}}
\|\BA\grad U_\Psi^\star-\pihd (\BA\grad U_\Psi^\star)\|_{\dagger,\ddiv,\CT_h},
\end{aligned}
\end{equation}
to deal with the non-conformity of the IPDG approximation under minimal regularity.
We finally introduce the following short-hand notations
\begin{align}
\label{eq_definition_gba_summed}
\gbag^2 &\eq 4\cgbag^2 + 2\tgbag^2,
\quad
&\gbad^2 &\eq 4\cgbad^2 + 2\tgbad^2,
\\
\cgba^2 &\eq \cgbag^2 + \cgbad^2,
\quad
&\tgba^2 &\eq \tgbag^2 + \tgbad^2,
\end{align}
and
\begin{equation}
\gba^2 \eq \gbag^2 + \gbad^2 = 4\cgba^2 + 2\tgba^2.
\end{equation}
\end{subequations}

By combining elliptic regularity and approximation properties of finite element
spaces, it is easy to see that $\gba \to 0$ as $h/p \to 0$. However, the dependency
on the PDE coefficients and on the frequency may be hard to track. In particular,
the stability constant $\gst$ typically plays a central role in estimating $\gba$.
Qualitative estimates for $\gba$ can be found in
\cite{chaumontfrelet_nicaise_2020a,melenk_sauter_2010a}, and explicit estimates
are available in specific situations \cite{chaumontfrelet_ern_vohralik_2021a}.
\revision{We also provide an estimate in Appendix \ref{appendix_approximation_factors}
below.}

\section{Duality under minimal regularity}
\label{section_duality}

Here, we state in a separate section the key technical result that enables us
to perform duality techniques under minimal regularity assumptions. We believe
it will be important in other contexts, so that we make it easy to reference.
Although the arguments are not complicated, we were not able to find a proof
in the literature.

\begin{lemma}[Lifting error]
\label{lemma_lifting}
For all $\phi \in H^1(\CT_h)$ and $\bsig \in \BX_{\GN}(\ddiv,\Omega)$, we have
\begin{multline}
\label{eq_nonconf_identity}
(\GRAD(\phi),\bsig)_{\CT_h}
+
(\phi,\div \bsig)_{\CT_h}
-
(\phi,\bsig \cdot \bn)_{\CF_h^{\rm R}}
=
\\
(\GRAD(\phi-\widetilde\phi),\bsig-\bsig_h)_{\CT_h}
+
(\phi-\widetilde \phi,\div(\bsig-\bsig_h))_{\CT_h}
-
(\phi-\widetilde \phi,(\bsig-\bsig_h) \cdot \bn)_{\CF_h^{\rm R}}
\end{multline}
and in particular
\begin{equation}
\label{eq_nonconf_estimate}
|(\GRAD(\phi),\bsig)_{\CT_h}+(\phi,\div \bsig)_{\CT_h}-(\phi,\bsig \cdot \bn)_{\CF_h^{\rm R}}|
\leq
\|\phi-\widetilde \phi\|_{\dagger,1,\CT_h}
\|\bsig-\bsig_h\|_{\dagger,\ddiv,\CT_h}
\end{equation}
for all $\widetilde \phi \in H^1_{\GD}(\Omega)$ and
$\bsig_h \in \BCP_p(\CT_h) \cap \BX_{\GN}(\ddiv,\Omega)$.
\end{lemma}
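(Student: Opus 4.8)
The plan is to exploit the sesquilinearity of the expression on the left-hand side of \eqref{eq_nonconf_identity} together with two orthogonality-type cancellations: one coming from the discrete integration-by-parts identity already recorded for polynomial divergence-conforming fields, and one coming from the continuous Green formula for conforming scalar functions. To organize this I would introduce the shorthand
$$
\LB(\phi,\bsig) \eq (\GRAD(\phi),\bsig)_{\CT_h} + (\phi,\div\bsig)_{\CT_h} - (\phi,\bsig\cdot\bn)_{\CF_h^{\rm R}},
$$
which is sesquilinear on $H^1(\CT_h) \times \BX_{\GN}(\ddiv,\Omega)$, so that the claimed identity reads $\LB(\phi,\bsig) = \LB(\phi-\widetilde\phi,\bsig-\bsig_h)$.

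First I would expand $\LB(\phi,\bsig)$ using the splittings $\phi = (\phi-\widetilde\phi)+\widetilde\phi$ and $\bsig = (\bsig-\bsig_h)+\bsig_h$, which by additivity in each slot produces four terms; the goal is then to show that the three terms other than $\LB(\phi-\widetilde\phi,\bsig-\bsig_h)$ vanish. For the two terms whose second argument is $\bsig_h$, namely $\LB(\phi-\widetilde\phi,\bsig_h)$ and $\LB(\widetilde\phi,\bsig_h)$, I would invoke directly the identity $(\GRAD(\psi),\bw_h)_{\CT_h} + (\psi,\div\bw_h)_{\CT_h} = (\psi,\bw_h\cdot\bn)_{\GR}$ recorded above for every $\psi \in H^1(\CT_h)$ and every $\bw_h \in \BCP_p(\CT_h)\cap\BX_{\GN}(\ddiv,\Omega)$. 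Since the Robin faces cover $\GR$, the right-hand side equals $(\psi,\bw_h\cdot\bn)_{\CF_h^{\rm R}}$, so this is precisely the statement $\LB(\psi,\bsig_h) = 0$, and both terms drop out.

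For the remaining term $\LB(\widetilde\phi,\bsig-\bsig_h)$ I would use that $\widetilde\phi \in H^1_{\GD}(\Omega)$, for which $\GRAD(\widetilde\phi) = \grad\widetilde\phi$, so the element-wise contributions assemble into a global pairing and the standard Green formula for $\btau \eq \bsig - \bsig_h \in \BX_{\GN}(\ddiv,\Omega)$ gives $(\grad\widetilde\phi,\btau)_\Omega + (\widetilde\phi,\div\btau)_\Omega = \langle\btau\cdot\bn,\widetilde\phi\rangle_{\partial\Omega}$. Here the boundary pairing localizes: the trace of $\widetilde\phi$ vanishes on $\GD$, the normal trace $\btau\cdot\bn$ vanishes on $\GN$ since $\btau \in \BH_{\GN}(\ddiv,\Omega)$, and on $\GR$ the trace $\btau\cdot\bn$ lies in $L^2(\GR)$ by definition of $\BX$, so the pairing reduces to $(\widetilde\phi,\btau\cdot\bn)_{\GR} = (\widetilde\phi,\btau\cdot\bn)_{\CF_h^{\rm R}}$, which is exactly the subtracted term in $\LB$. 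Hence $\LB(\widetilde\phi,\btau) = 0$, and with all three cancellations in hand \eqref{eq_nonconf_identity} follows.

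The step I expect to be most delicate is this last localization of the boundary pairing in the minimal-regularity $\BH(\ddiv)$ setting: a priori $\btau\cdot\bn$ is only an element of $H^{-1/2}(\partial\Omega)$, and splitting the duality pairing over the disjoint pieces $\GD$, $\GN$, $\GR$ requires the partial-trace framework for relatively open boundary subsets, which is exactly why the spaces $\BH_\Gamma(\ddiv,\Omega)$ and $\BX_{\GN}(\ddiv,\Omega)$ were set up following \cite{fernandes_gilardi_1997a}. Once \eqref{eq_nonconf_identity} is established, the estimate \eqref{eq_nonconf_estimate} is immediate: applying the duality bound \eqref{eq_norm_dual} with $\phi-\widetilde\phi \in H^1(\CT_h)$ and $\bsig-\bsig_h \in \BX_{\GN}(\ddiv,\Omega)$ in place of $\phi$ and $\bw$ yields the product $\|\phi-\widetilde\phi\|_{\dagger,1,\CT_h}\|\bsig-\bsig_h\|_{\dagger,\ddiv,\CT_h}$ on the right-hand side.
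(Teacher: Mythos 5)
Your proof is correct and follows essentially the same route as the paper's: both rest on the two cancellations $\LB(\widetilde\phi,\btau)=0$ for conforming $\widetilde\phi$ (Green's formula with the boundary conditions on $\GD$, $\GN$, $\GR$) and $\LB(\psi,\bsig_h)=0$ for discrete divergence-conforming $\bsig_h$ (the recorded discrete integration-by-parts identity for the lifted gradient), differing only in bookkeeping (your four-term sesquilinear expansion versus the paper's two sequential substitutions). The final estimate via \eqref{eq_norm_dual} is identical.
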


\begin{proof}
We first observe that if $\widetilde \phi \in H^1_{\GD}(\Omega)$, integration by parts gives
\begin{equation*}
(\GRAD(\widetilde \phi),\btau)_{\CT_h}
+
(\widetilde \phi,\div \btau)
-
(\widetilde \phi,\btau \cdot \bn)_{\GR}
=
(\grad \widetilde \phi,\btau)_{\CT_h}
+
(\widetilde \phi,\div \btau)
-
(\widetilde \phi,\btau \cdot \bn)_{\GR}
=
0
\end{equation*}
for all $\btau \in \BX_{\GN}(\ddiv,\Omega)$ due to the essential boundary conditions
on $\GD$ and $\GN$. Thus,
\begin{align*}
(\GRAD(\phi),\bsig)_{\CT_h} + (\phi,\div \bsig)_{\CT_h} - (\phi,\bsig \cdot \bn)_{\CF_h^{\rm R}}
=
(\GRAD(\phi-\widetilde \phi),\bsig)_{\CT_h} + (\phi-\widetilde \phi,\div \bsig)_{\CT_h}
-
(\phi-\widetilde \phi,\bsig \cdot \bn)_{\CF_h^{\rm R}}.
\end{align*}
If $\bsig_h \in \BCP_p(\CT_h) \cap \revision{\BX_{\GN}}(\ddiv,\Omega)$, we also have
\begin{align*}
0
&=
(\GRAD(\phi),\bsig_h)_{\CT_h}
+
(\phi,\div \bsig_h)_{\CT_h}
-
(\phi,\bsig_h \cdot \bn)_{\CF_h^{\rm R}}
\\
&=
(\GRAD(\phi)-\grad \widetilde \phi,\bsig_h)_{\CT_h}
+
(\grad \widetilde \phi,\bsig_h)_{\CT_h}
+
(\phi,\div \bsig_h)_{\CT_h}
-
(\phi,\bsig_h \cdot \bn)_{\CF_h^{\rm R}}
\\
&=
(\GRAD(\phi-\widetilde \phi),\bsig_h)_{\CT_h}
+
(\phi-\widetilde \phi,\div \bsig_h)_{\CT_h}
-
(\phi-\widetilde \phi,\bsig_h \cdot \bn)_{\CF_h^{\rm R}},
\end{align*}
and \eqref{eq_nonconf_identity} follows after summation. Then, \eqref{eq_nonconf_estimate}
is a direct consequence of \eqref{eq_norm_dual}.
\end{proof}

A simple consequence of Lemma \ref{lemma_lifting} is the following result,
which is crucial to estimate the non-conformity error in the context of
duality arguments.

\begin{corollary}[Control of the non-confomity]
\label{corollary_nonconf}
For all $\phi \in H^1(\CT_h)$ and $\xi \in H^1_{\revision{\GD}}(\Omega)$
with $\BA\grad \xi \in \BX_{\GN}(\ddiv,\Omega)$, we have
\begin{equation}
\label{eq_nonconf}
\left |
a_h(\phi,\xi)+(\phi,\div(\BA\grad\xi))_{\CT_h}-(\phi,\BA\grad \xi \cdot \bn)_{\CF_h^{\rm R}}
\right |
\leq
\|\phi-\widetilde \phi\|_{\dagger,1,\CT_h}\|\BA\grad \xi-\bsig_h\|_{\dagger,\ddiv,\CT_h}
\end{equation}
for all $\widetilde \phi \in H^1_{\GD}(\Omega)$ and
$\bsig_h \in \BCP_p(\CT_h) \cap \BX_{\GN}(\ddiv,\Omega)$.
\end{corollary}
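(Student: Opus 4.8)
The plan is to reduce the statement directly to the estimate \eqref{eq_nonconf_estimate} of Lemma \ref{lemma_lifting} by choosing $\bsig \eq \BA\grad\xi$ and unfolding the definition of $a_h$. Since the corollary is advertised as a simple consequence of the lemma, the work is essentially bookkeeping: matching the three terms on the left-hand side of \eqref{eq_nonconf} with the three terms controlled by the lemma.

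First I would exploit the hypothesis $\xi \in H^1_{\GD}(\Omega)$ to simplify $a_h(\phi,\xi)$. By the defining property of the stabilization, $s_h(\phi,\xi) = 0$ because $\xi$ is conforming, while $\GRAD(\xi) = \grad\xi$ since $\GRAD$ coincides with the broken gradient on $H^1_{\GD}(\Omega)$. Hence \eqref{eq_IPDG_discrete_gradient} collapses to
\begin{equation*}
a_h(\phi,\xi) = (\BA\GRAD(\phi),\grad\xi)_{\CT_h}.
\end{equation*}

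Next I would set $\bsig \eq \BA\grad\xi$, which lies in $\BX_{\GN}(\ddiv,\Omega)$ precisely by the standing assumption of the corollary, so that Lemma \ref{lemma_lifting} applies. Using that each $\BA_P$ is real and symmetric, I would rewrite $(\BA\GRAD(\phi),\grad\xi)_{\CT_h} = (\GRAD(\phi),\BA\grad\xi)_{\CT_h} = (\GRAD(\phi),\bsig)_{\CT_h}$. With this identification, the quantity inside the absolute value in \eqref{eq_nonconf} becomes
\begin{equation*}
(\GRAD(\phi),\bsig)_{\CT_h} + (\phi,\div\bsig)_{\CT_h} - (\phi,\bsig\cdot\bn)_{\CF_h^{\rm R}},
\end{equation*}
which is exactly the expression bounded in \eqref{eq_nonconf_estimate}. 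Invoking that estimate with $\bsig = \BA\grad\xi$ for the given $\widetilde\phi$ and $\bsig_h$, and substituting $\bsig = \BA\grad\xi$ back into the right-hand side, yields the claimed bound.

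Since every step is either an identity or a direct invocation of an already-established result, I do not anticipate a genuine obstacle. The only point deserving a little care is the symmetry manipulation converting $(\BA\GRAD(\phi),\grad\xi)_{\CT_h}$ into $(\GRAD(\phi),\BA\grad\xi)_{\CT_h}$ in the complex setting: one must observe that $\BA$ is real, so conjugation commutes with it, and that symmetry then moves $\BA$ across the bilinear dot product under the integral sign. This is elementary, but it is the one place where the hypotheses on $\BA$ (reality and symmetry) are actually used.
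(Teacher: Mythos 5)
Your proposal is correct and is exactly the argument the paper intends: the corollary is stated as a direct consequence of Lemma \ref{lemma_lifting}, obtained by taking $\bsig = \BA\grad\xi$ and observing that $a_h(\phi,\xi) = (\GRAD(\phi),\BA\grad\xi)_{\CT_h}$ since $s_h(\phi,\xi)=0$ and $\GRAD(\xi)=\grad\xi$ for conforming $\xi$, with the real symmetry of $\BA$ justifying moving it across the inner product. The paper leaves these details implicit, and your write-up supplies them correctly.
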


\section{A priori analysis}
\label{section_apriori}

The purpose of this section is to establish the existence and uniqueness of
a discrete solution $u_h \in \CP_p(\CT_h)$ to \eqref{eq_helmholtz_ipdg} and
to derive a priori error estimates controlling $u-u_h$ in suitable norms.
The proof follows the line of the Schatz argument \cite{schatz_1974a}, with
suitable modifications to take into account the non-conforming nature of
the discrete scheme.

In this section, we require that there exists $\rho > 0$ such that
\begin{equation}
\label{eq_assumption_stabilization_apriori}
\Re s_h(v_h,v_h) \geq \rho^2 \|v_h-\pig v_h\|_{\dagger,1,\CT_h}^2
\qquad
\forall v_h \in \CP_p(\CT_h).
\end{equation}
This assumption always holds true when
$s_h(\cdot,\cdot) = \beta/h(\jmp{\cdot},\jmp{\cdot})_{\CF_h}$
with $\beta > 0$. It is also the case when $s_h$ takes the form
\eqref{eq_penalization_usual} if the penalization parameter is
sufficiently large \cite{agut_diaz_2013a}. We note that this assumption
rules out some choices of stabilization including complex-value stabilization
parameters, as proposed e.g. in \cite{feng_wu_2009a,sauter_zech_2015a}.
For the sake of simplicity, we only consider stabilizations satisfying
\eqref{eq_assumption_stabilization_apriori}, but slight modifications
of our proofs could handle more general situations.

We start with the main duality argument, which is a generalization of the Aubin-Nitsche trick.

\begin{lemma}[Aubin-Nitsche]
\label{lemma_aubin_nitsche}
Assume that $u_h \in \CP_p(\CT_h)$ satisfies \eqref{eq_helmholtz_ipdg}.
Then, we have
\begin{equation}
\label{eq_aubin_nitsche_omega}
\omega\|u-u_h\|_{\mu,\CT_h}
\leq
\cgbag \enorm{u-u_h}_{\omega,\CT_h}
+
\cgbad \|u_h-\pig u_h\|_{\dagger,1,\CT_h}
\end{equation}
and
\begin{equation}
\label{eq_aubin_nitsche_GR}
\omega^{1/2}\|u-u_h\|_{\gamma,\CF_h^{\rm R}}
\leq
\tgbag \enorm{u-u_h}_{\omega,\CT_h}
+
\tgbad\|u_h-\pig u_h\|_{\dagger,1,\CT_h}.
\end{equation}
\end{lemma}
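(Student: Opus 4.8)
The plan is to run two parallel duality (Aubin--Nitsche) arguments, one for each estimate, both organized around the nonconformity functional controlled by Corollary \ref{corollary_nonconf}. I focus on \eqref{eq_aubin_nitsche_omega}; the proof of \eqref{eq_aubin_nitsche_GR} is identical after replacing the volume dual problem by the boundary one. First I would linearize the left-hand side by testing against a normalized source: setting $\psi \eq (u-u_h)/\|u-u_h\|_{\mu,\CT_h}$ gives $\|\psi\|_{\mu,\Omega} = 1$ and $\omega\|u-u_h\|_{\mu,\CT_h} = \omega(\mu(u-u_h),\psi)_{\CT_h}$, so I may introduce the associated dual solution $\xi \eq u^\star_\psi \in H^1_{\GD}(\Omega)$.

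The crucial preliminary observation is that the strong form of the adjoint problem makes $\BA\grad\xi \in \BX_{\GN}(\ddiv,\Omega)$: indeed $\div(\BA\grad\xi) = -\omega^2\mu\xi - \omega\mu\psi \in L^2(\Omega)$, while the essential and Robin conditions give $\BA\grad\xi\cdot\bn = 0$ on $\GN$ and $\BA\grad\xi\cdot\bn = -i\omega\gamma\xi \in L^2(\GR)$. This is precisely the regularity needed to invoke Corollary \ref{corollary_nonconf} with $\xi$. The heart of the argument is then the identity $b_h(u-u_h,\xi) = \omega(\mu(u-u_h),\psi)_{\CT_h} + N(u-u_h,\xi)$, where $N(\phi,\xi) \eq a_h(\phi,\xi) + (\phi,\div(\BA\grad\xi))_{\CT_h} - (\phi,\BA\grad\xi\cdot\bn)_{\CF_h^{\rm R}}$ is exactly the functional bounded in \eqref{eq_nonconf}. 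I would establish this by expanding $b_h(u-u_h,\xi)$, rewriting $a_h(u-u_h,\xi)$ through the definition of $N$, and substituting the strong form just derived for $\div(\BA\grad\xi)$ and for the Robin trace; the volume contribution $-\omega^2(\mu\cdot,\xi)_{\CT_h}$ and the boundary contribution $-i\omega(\gamma\cdot,\xi)_{\CF_h^{\rm R}}$ then cancel exactly, leaving only the source $\omega(\mu\cdot,\psi)_{\CT_h}$ and the residual $N$. I expect this bookkeeping---tracking the complex conjugates and the interface and boundary contributions on $\GN$ and $\GR$---to be the main technical obstacle, even though no inequality is yet involved.

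It then remains to bound the two pieces. For $b_h(u-u_h,\xi)$ I would use Galerkin orthogonality \eqref{eq_galerkin_orthogonality} to subtract $\pihg\xi \in \CP_p(\CT_h)\cap H^1_{\GD}(\Omega)$, giving $b_h(u-u_h,\xi) = b_h(u-u_h,\xi-\pihg\xi)$, then the continuity bound \eqref{eq_continuity} (legitimate since $\xi-\pihg\xi$ is conforming), the norm comparison \eqref{eq_norm_control}, and the definition of $\cgbag$ to arrive at $|b_h(u-u_h,\xi)| \le \cgbag\enorm{u-u_h}_{\omega,\CT_h}$. For $N(u-u_h,\xi)$ I would apply \eqref{eq_nonconf} with $\bsig_h = \pihd(\BA\grad\xi)$ and $\widetilde\phi = u - \pig u_h \in H^1_{\GD}(\Omega)$; since $u$ is conforming this yields $\|(u-u_h)-\widetilde\phi\|_{\dagger,1,\CT_h} = \|u_h-\pig u_h\|_{\dagger,1,\CT_h}$, while $\|\BA\grad\xi - \pihd(\BA\grad\xi)\|_{\dagger,\ddiv,\CT_h} \le \cgbad$ by definition, so $|N(u-u_h,\xi)| \le \cgbad\|u_h-\pig u_h\|_{\dagger,1,\CT_h}$.

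Combining these two bounds with the identity delivers \eqref{eq_aubin_nitsche_omega}. Repeating the entire argument with $\Psi \eq (u-u_h)/\|u-u_h\|_{\gamma,\CF_h^{\rm R}}$ and the boundary dual solution $U^\star_\Psi$ then gives \eqref{eq_aubin_nitsche_GR}; the only change is that $U^\star_\Psi$ carries no volume source, so its strong form reads $\div(\BA\grad U^\star_\Psi) = -\omega^2\mu U^\star_\Psi$, and its Robin trace acquires the extra term $\omega^{1/2}\gamma\Psi$, which is precisely what survives the cancellation to reproduce $\omega^{1/2}(\gamma(u-u_h),\Psi)_{\CF_h^{\rm R}}$ on the right-hand side. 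The bounds on $b_h$ and on $N$ are then controlled by $\tgbag$ and $\tgbad$ in the same way.
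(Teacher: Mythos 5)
Your proposal is correct and follows essentially the same route as the paper: the same dual problem, the same use of Corollary \ref{corollary_nonconf} with the choices $\widetilde\phi = u-\pig u_h$ and $\bsig_h = \pihd(\BA\grad\xi)$, and the same Galerkin-orthogonality/continuity bound for $b_h(e_h,\xi-\pihg\xi)$. The only (immaterial) difference is that you normalize the dual datum $\psi$ at the outset, whereas the paper works with the unnormalized error and divides by $\|e_h\|_{\mu,\CT_h}$ at the end.
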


\begin{proof}
For the sake of shortness, we set $e_h \eq u-u_h$.
We first establish \eqref{eq_aubin_nitsche_omega}. To do so,
we define $\xi$ as the unique element of $H^1_{\GD}(\Omega)$ such that
$b(\phi,\xi) = \omega (\mu\phi,e_h)_\Omega$ for all $\phi \in H^1_{\GD}(\Omega)$.
Integrating by parts, we see that the identities
\begin{equation*}
-\omega^2 \mu \xi - \div\left (\BA\grad \xi\right ) = \revision{\omega\mu} e_h
\qquad
\BA\grad \xi \cdot \bn+i\omega\xi = 0
\end{equation*}
respectively hold in $L^2(\Omega)$ and $L^2(\GR)$. As a result,
\begin{equation*}
\omega \|e_h\|_{\mu,\CT_h}^2
=
-\omega^2 (\mu e_h,\xi)_{\revision{\CT_h}}
-i\omega(\revision{\gamma}e_h,\revision{\xi})_{\CF_h^{\rm R}}
-
\left \{
(e_h,\div(\BA\grad \xi))_{\CT_h}
-(e_h,\BA\grad \xi \cdot \bn)_{\CF^{\rm R}}
\right \}
\end{equation*}
and we have from Corollary \ref{corollary_nonconf} that
\begin{equation*}
\omega \|e_h\|_{\mu,\CT_h}^2
\leq
|b_h(e_h,\xi)|
+
\|e_h-\widetilde \phi\|_{\dagger,1,\CT_h}
\|\BA\grad \xi-\bsig_h\|_{\dagger,\ddiv,\CT_h}
\end{equation*}
for all $\widetilde \phi \in H^1_{\GD}(\Omega)$ and
$\bsig_h \in \BCP_p(\CT_h) \cap \BX_{\GN}(\ddiv,\Omega)$.
On the one hand, we select $\widetilde \phi = u-\pig u_h$ and
$\bsig_h = \pi_h^{\rm d}(\BA\grad\xi)$, so that using \eqref{eq_definition_gbad},
we have
\begin{equation*}
\omega \|e_h\|_{\mu,\CT_h}^2
\leq
|b_h(e_h,\xi)|
+
\cgbad \|u_h-\pig u_h\|_{\dagger,1,\CT_h} \|e_h\|_{\mu,\CT_h}.
\end{equation*}
On the other hand, using \eqref{eq_galerkin_orthogonality} and \eqref{eq_continuity}, we have
\begin{equation*}
|b_h(e_h,\xi)|
=
|b_h(e_h,\xi-\pihg\xi)|
\leq
\enorm{e_h}_{\omega,\CT_h}\enorm{\xi-\pi_h^{\rm g}\xi}_{\omega,\Omega}
\leq
\cgbag \enorm{e_h}_{\omega,\CT_h}\|e_h\|_{\mu,\CT_h}.
\end{equation*}

We then prove \eqref{eq_aubin_nitsche_GR}. Instead of $\xi$,
we define $\chi$ as the unique element of $H^1_{\GD}(\Omega)$
such that $b(\phi,\chi) = \omega^{1/2}(\gamma\revision{\phi},e_h)_{\GR}$ for all
$\phi \in H^1_{\GD}(\Omega)$. We then have
\begin{equation*}
-\omega^2 \mu \chi - \div\left (\BA\grad \chi\right ) = 0
\qquad
\BA\grad \chi \cdot \bn+i\omega\chi = \revision{\omega^{1/2}\gamma}e_h
\end{equation*}
in $L^2(\Omega)$ and $L^2(\GR)$, respectively. We thus have
\begin{equation*}
\omega^{1/2} \|e_h\|_{\gamma,\CF_h^{\rm R}}^2
=
-\omega^2 (\mu e_h,\chi)_{\revision{\CT_h}}
-i\omega(e_h,\phi)_{\CF_h^{\rm R}}
-
\left \{
(e_h,\div(\BA\grad \chi))_{\CT_h}
-(e_h,\BA\grad \chi \cdot \bn)_{\CF^{\rm R}}
\right \},
\end{equation*}
and applying again Corollary \ref{corollary_nonconf} we arrive at
\begin{equation*}
\omega^{1/2} \|e_h\|_{\gamma,\CF_h^{\rm R}}^2
\leq
|b_h(e_h,\chi-\pi_h^{\rm g} \chi)|
+
\|u_h-\pig u_h\|_{\dagger,1,\CT_h}
\|\BA\grad \chi-\pihd(\BA\grad \chi)\|_{\dagger,\ddiv,\CT_h},
\end{equation*}
and the result follows from the definitions of $\tgbag$ and $\tgbad$
given in \eqref{eq_definition_gba_summed}.
\end{proof}

We can now complete the Schatz argument, leading to quasi-optimality
of the discrete solution for sufficiently refined meshes.

\begin{theorem}[A priori estimate]
If $\gbag < 1$ and $\blunder{\sqrt{2}}\gbad \leq \rho$, then there exists a unique
solution $u_h \in \CP_p(\CT_h)$ to \eqref{eq_helmholtz_ipdg} and the estimate
\begin{multline}
\label{eq_aprio_estimate}
\enorm{u-u_h}_{\omega,\CT_h}
\leq
\frac{1}{1-\gbag^2}
\\
\left (
\min_{v_h \in \CP_p(\CT_h) \cap H^1_{\GD}(\Omega)} \enorm{u-v_h}_{\omega,\Omega}^{\blunder{2}}
\blunder{+
\frac{1}{\rho^2}\min_{\bsig_h \in \BCP_p(\CT_h) \cap \BX_{\GN}(\ddiv,\Omega}
\|\BA\grad u-\bsig_h\|_{\dagger,\ddiv,\CT_h}^2}
\right )^{1/2}
\end{multline}
holds true. In addition, if $\blunder{\sqrt{2}}\gbad < \rho$, we also have
\begin{multline}
\label{eq_aprio_nonconf}
\|u_h-\pig u_h\|_{\dagger,1,\CT_h}
\leq
\left (
\frac{1}{(1-\gbag^2)(\rho^2-\blunder{2}\gbad^2)}
\right )^{1/2}
\\
\left (
\min_{v_h \in \CP_p(\CT_h) \cap H^1_{\GD}(\Omega)} \enorm{u-v_h}_{\omega,\Omega}^{\blunder{2}}
\blunder{+
\frac{1}{\rho^2}\min_{\bsig_h \in \BCP_p(\CT_h) \cap \BX_{\GN}(\ddiv,\Omega}
\|\BA\grad u-\bsig_h\|_{\dagger,\ddiv,\CT_h}^2}
\right )^{1/2}.
\end{multline}
\end{theorem}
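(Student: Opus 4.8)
The plan is to run a Schatz-type argument adapted to the non-conforming setting, using Lemma \ref{lemma_aubin_nitsche} to control the low-order terms and Lemma \ref{lemma_lifting} to absorb the non-conformity. Throughout I write $e_h \eq u-u_h$ and abbreviate the two best-approximation quantities $\delta_{\rm g} \eq \min_{v_h \in \CP_p(\CT_h) \cap H^1_{\GD}(\Omega)} \enorm{u-v_h}_{\omega,\Omega}$ and $\delta_{\rm d} \eq \min_{\bsig_h \in \BCP_p(\CT_h) \cap \BX_{\GN}(\ddiv,\Omega)} \|\BA\grad u-\bsig_h\|_{\dagger,\ddiv,\CT_h}$, together with $N \eq \|u_h-\pig u_h\|_{\dagger,1,\CT_h}$. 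For existence and uniqueness I would note that \eqref{eq_helmholtz_ipdg} is a square linear system, so it suffices to show that the a priori bound forces $u_h=0$ when $f=0$; since $f=0$ gives $u=0$ and hence $\delta_{\rm g}=\delta_{\rm d}=0$, the estimate \eqref{eq_aprio_estimate} (whose derivation only uses that $u_h$ solves \eqref{eq_helmholtz_ipdg}) yields $\enorm{u_h}_{\omega,\CT_h}=0$, whence $u_h=0$. Thus I focus on the a priori estimate for an arbitrary solution $u_h$.

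The starting point is the coercivity identity obtained by expanding $\Re b_h(e_h,e_h)$: since $(\BA\GRAD e_h,\GRAD e_h)_{\CT_h}$, $\|e_h\|_{\mu,\CT_h}^2$ and $\|e_h\|_{\gamma,\CF_h^{\rm R}}^2$ are real while the Robin term is purely imaginary, one gets $\enorm{e_h}_{\omega,\CT_h}^2 = 2\omega^2\|e_h\|_{\mu,\CT_h}^2 + \omega\|e_h\|_{\gamma,\CF_h^{\rm R}}^2 + \Re b_h(e_h,e_h) - \Re s_h(e_h,e_h)$. I would then treat the three contributions separately. For the low-order terms, squaring the Aubin--Nitsche estimates \eqref{eq_aubin_nitsche_omega}--\eqref{eq_aubin_nitsche_GR} and using $(a+b)^2 \leq 2a^2+2b^2$ together with \eqref{eq_definition_gba_summed} gives $2\omega^2\|e_h\|_{\mu,\CT_h}^2 + \omega\|e_h\|_{\gamma,\CF_h^{\rm R}}^2 \leq \gbag^2\enorm{e_h}_{\omega,\CT_h}^2 + \gbad^2 N^2$. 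For the stabilization term, the vanishing property of $s_h$ on conforming arguments gives $s_h(e_h,e_h)=s_h(u_h,u_h)$, so \eqref{eq_assumption_stabilization_apriori} yields $-\Re s_h(e_h,e_h) \leq -\rho^2 N^2$.

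The crux is to bound $\Re b_h(e_h,e_h) \leq \enorm{e_h}_{\omega,\CT_h}\delta_{\rm g} + N\delta_{\rm d}$, and I expect this to be the main obstacle. I would split, for arbitrary conforming discrete $v_h$, $b_h(e_h,e_h) = b_h(e_h,u-v_h) + b_h(e_h,v_h-u_h)$. The first term is controlled by continuity \eqref{eq_continuity} (legitimate since $u-v_h \in H^1_{\GD}(\Omega)$) followed by minimization over $v_h$, giving $\leq \enorm{e_h}_{\omega,\CT_h}\delta_{\rm g}$. For the second term, applying Galerkin orthogonality \eqref{eq_galerkin_orthogonality} to the conforming discrete function $\pihg(v_h-u_h) = v_h-\pihg u_h$ collapses it to $-b_h(e_h,z_h)$ with $z_h \eq u_h-\pihg u_h$, a purely non-conforming discrete remainder independent of $v_h$. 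I would then write $b_h(e_h,z_h) = b_h(u,z_h) - (f,z_h)_{\CT_h}$ via the discrete equation \eqref{eq_helmholtz_ipdg}, expand $b_h(u,z_h)$ (noting $a_h(u,z_h)=(\BA\grad u,\GRAD z_h)_{\CT_h}$ because $u$ is conforming), and invoke Lemma \ref{lemma_lifting} with $\phi=z_h$, $\bsig=\BA\grad u$, $\widetilde\phi=\pig z_h$ and $\bsig_h=\pihd(\BA\grad u)$. Here $\BA\grad u \in \BX_{\GN}(\ddiv,\Omega)$ follows from \eqref{eq_helmholtz_weak} (the strong form gives $\div(\BA\grad u)=-\omega^2\mu u-f \in L^2(\Omega)$ and the boundary conditions supply the Neumann and Robin trace regularity), while the identity $z_h-\pig z_h = u_h-\pig u_h$ makes the lifting factor exactly $N$. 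Substituting the relations $\div(\BA\grad u)=-\omega^2\mu u-f$ and $\BA\grad u\cdot\bn=i\omega\gamma u$ on $\GR$, all volume and Robin contributions telescope and one is left with $|b_h(e_h,z_h)| \leq N\delta_{\rm d}$.

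Collecting the three bounds in the coercivity identity produces the master inequality $(1-\gbag^2)\enorm{e_h}_{\omega,\CT_h}^2 + (\rho^2-\gbad^2)N^2 \leq \enorm{e_h}_{\omega,\CT_h}\delta_{\rm g} + N\delta_{\rm d}$. I would finish with two weighted Young inequalities, $N\delta_{\rm d} \leq \frac{\rho^2}{2}N^2 + \frac{1}{2\rho^2}\delta_{\rm d}^2$ and $\enorm{e_h}_{\omega,\CT_h}\delta_{\rm g} \leq \frac{1-\gbag^2}{2}\enorm{e_h}_{\omega,\CT_h}^2 + \frac{1}{2(1-\gbag^2)}\delta_{\rm g}^2$; after rearranging, multiplying by $2(1-\gbag^2)$ and using $1-\gbag^2<1$, this becomes $(1-\gbag^2)^2\enorm{e_h}_{\omega,\CT_h}^2 + (1-\gbag^2)(\rho^2-2\gbad^2)N^2 \leq \delta_{\rm g}^2 + \rho^{-2}\delta_{\rm d}^2$. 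Discarding the second term (nonnegative since $\sqrt{2}\gbad\leq\rho$) yields \eqref{eq_aprio_estimate}, while discarding the first yields \eqref{eq_aprio_nonconf} once $\sqrt{2}\gbad<\rho$. The delicate point in this last step is choosing the Young weights ($\rho^2$ and $1-\gbag^2$) precisely so that the constants match the claimed bounds.
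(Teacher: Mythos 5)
Your proposal is correct and follows essentially the same Schatz-type argument as the paper: the coercivity identity for $\Re b_h(e_h,e_h)$ augmented by the low-order terms, Lemma \ref{lemma_aubin_nitsche} to absorb those terms, the lifting lemma (the paper invokes it via Corollary \ref{corollary_nonconf} applied to $a_h(u,v_h-u_h)$ rather than via $z_h=u_h-\pihg u_h$, which is equivalent) combined with assumption \eqref{eq_assumption_stabilization_apriori} for the consistency/non-conformity term, and the same weighted Young inequalities to recover the stated constants. The only caveat is notational: $\pihg$ is defined in the paper only on $H^1_{\GD}(\Omega)$, so $\pihg u_h$ is an abuse of notation, but your argument is insensitive to which conforming function is subtracted since $z_h-\pig z_h=u_h-\pig u_h$ in any case.
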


\blunder
{
\begin{remark}[Higher-order term]
The last term appearing in the right-hand side \eqref{eq_aprio_estimate}
is unpleasant, since in general the finite element error is not solely
controlled by the best-approximation error. Nevertheless, this term
exhibits the right convergence rate. Besides, it can be made higher-order for
sufficiently (piecewise) smooth solutions $u$. Indeed, the polynomial
degree $p$ appearing in the approximation of $\BA\grad u$ only depends
on the polynomial degree chosen for the gradient reconstruction $\GRAD(u_h)$,
not the polynomial degree of the discretization space $\CP_p(\CT_h)$.
It is therefore possible to reconstruct $\GRAD(u_h)$ in, e.g., $\BCP_{p+1}(\CT_h)$
to make the undesired term in \eqref{eq_aprio_estimate} negligible. Notice that
this argument works if the stabilization parameter is sufficiently large according
to \eqref{eq_assumption_stabilization_apriori}.
\end{remark}
}

\begin{proof}
We first observe that
\begin{align*}
\Re b_h(e_h,e_h) + \omega\|e_h\|_{\gamma,\CF_h^{\rm R}}^2 + 2\omega^2\|e_h\|_{\mu,\CT_h}^2
&=
\enorm{e_h}_{\omega,\CT_h}^2
+
\Re s_h(u_h,u_h)
\\
&\geq
\enorm{e_h}_{\omega,\revision{\CT_h}}^2 + \rho^2 \|u_h-\pig u_h\|_{\dagger,1,\CT_h}^2.
\end{align*}

\blunder
{
On the other hand, \revision{for $v_h \in \CP_p(\CT_h) \cap H^1_{\GD}(\Omega)$}, we have
\begin{equation*}
b_h(e_h,e_h)
=
b_h(e_h,u-v_h)+b_h(e_h,v_h-u_h)
=
b_h(e,u-v_h) + b_h(u,v_h-u_h)-(f,v_h-u_h)_{\CT_h}.
\end{equation*}
Since $a_h$ is Hermitian, Corollary \ref{corollary_nonconf} ensures that
\begin{align*}
|b_h(u,v_h-u_h)-(f,v_h-u_h)_{\CT_h}|
&\leq
|a_h(u,v_h-u_h)-(\div(\BA\grad u),v_h-u_h)_{\CT_h}|
\\
&\leq
\|u_h-\pi_h^{\rm g}u_h\|_{\dagger,1,\CT_h}
\min_{\bsig_h \in \BCP_p(\CT_h) \cap \BX_{\GN}(\ddiv,\Omega}
\|\BA\grad u-\bsig_h\|_{\dagger,\ddiv,\CT_h}.
\\
&\leq
\frac{\rho^2}{2}\|u_h-\pi_h^{\rm g}u_h\|_{\dagger,1,\CT_h}^2
+
\frac{1}{2\rho^2}\min_{\bsig_h \in \BCP_p(\CT_h) \cap \BX_{\GN}(\ddiv,\Omega}
\|\BA\grad u-\bsig_h\|_{\dagger,\ddiv,\CT_h}^2.
\end{align*}
}

As a result, we have
\begin{align*}
\enorm{e_h}_{\omega,\CT_h}^2
+
\blunder{\frac{1}{2}}
\rho^2 \|u_h-\pig u_h\|_{\dagger,1,\CT_h}^2
&\leq
|b_h(e_h,u-v_h)| + 2\omega^2\|e_h\|_{\mu,\CT_h}^2 + \omega\|e_h\|_{\gamma,\CF_h^{\rm R}}^2
\\
&\blunder{+
\frac{1}{2\rho^2}\min_{\bsig_h \in \BCP_p(\CT_h) \cap \BX_{\GN}(\ddiv,\Omega}
\|\BA\grad u-\bsig_h\|_{\dagger,\ddiv,\CT_h}^2}
\\
&\leq
\enorm{e_h}_{\omega,\CT_h} \enorm{u-v_h}_{\omega,\Omega}
\\
&+
2\left (\cgbag \enorm{e_h}_{\omega,\CT_h} + \cgbad \|u_h-\pig u_h\|_{\dagger,1,\CT_h}\right )^2
\\
&+
\left (\tgbag \enorm{e_h}_{\omega,\CT_h} + \tgbad \|u_h-\pig u_h\|_{\dagger,1,\CT_h}\right )^2
\\
&
\blunder{+
\frac{1}{2\rho^2}\min_{\bsig_h \in \BCP_p(\CT_h) \cap \BX_{\GN}(\ddiv,\Omega}
\|\BA\grad u-\bsig_h\|_{\dagger,\ddiv,\CT_h}^2}
\\
&\leq
\enorm{e_h}_{\omega,\CT_h} \enorm{u-v_h}_{\omega,\Omega}
\\
&+
(4\cgbag+2\tgbag) \enorm{e_h}_{\omega,\CT_h}^2
+
(4\cgbad+2\tgbad) \|u_h-\pig u_h\|_{\dagger,1,\CT_h}^2
\\
&
\blunder{+
\frac{1}{2\rho^2}\min_{\bsig_h \in \BCP_p(\CT_h) \cap \BX_{\GN}(\ddiv,\Omega}
\|\BA\grad u-\bsig_h\|_{\dagger,\ddiv,\CT_h}^2}
\end{align*}
so that
\begin{multline*}
(1-4\cgbag^2-2\tgbag^2)\enorm{e_h}_{\omega,\CT_h}^2
+
\blunder{\frac{1}{2}}
(\rho^2-\blunder{8}\cgbad^2-\blunder{4}\tgbad^2)\|u_h-\pi_h^{\rm g} u_h\|_{\dagger,1,\CT_h}^2
\\
\leq
\enorm{e_h}_{\omega,\CT_h}\enorm{u-v_h}_{\omega,\Omega}
\blunder{+
\frac{1}{2\rho^2}\min_{\bsig_h \in \BCP_p(\CT_h) \cap \BX_{\GN}(\ddiv,\Omega}
\|\BA\grad u-\bsig_h\|_{\dagger,\ddiv,\CT_h}^2}
\end{multline*}
\blunder{and
\begin{multline*}
(1-4\cgbag^2-2\tgbag^2)\enorm{e_h}_{\omega,\CT_h}^2
+
\blunder
(\rho^2-\blunder{8}\cgbad^2-\blunder{4}\tgbad^2)\|u_h-\pi_h^{\rm g} u_h\|_{\dagger,1,\CT_h}^2
\\
\leq
\enorm{e_h}^2
+
\frac{1}{\rho^2}\min_{\bsig_h \in \BCP_p(\CT_h) \cap \BX_{\GN}(\ddiv,\Omega}
\|\BA\grad u-\bsig_h\|_{\dagger,\ddiv,\CT_h}^2.
\end{multline*}
At this point,}
\eqref{eq_aprio_estimate} and \eqref{eq_aprio_nonconf} follow
recalling the definitions of $\gbag$ and $\gbad$ in \eqref{eq_definition_gba}.
\end{proof}


\section{A posteriori analysis}
\label{section_aposteriori}

We now provide a posteriori error estimates under minimal regularity assumptions.
We first present an abstract framework, and then show how it can be applied to the
particular of residual-based estimators.

Throughout this section, we assume that $u_h$ is a fixed function in
$\CP_p(\CT_h)$ satisfying \eqref{eq_helmholtz_ipdg}. Notice that unique
solvability of \eqref{eq_helmholtz_ipdg} is not required. In particular,
the proposed analysis applies without any restriction on the mesh size
or polynomial degree. Also, in contrast to the a priori analysis, we do
not need any specific positivity assumption on the stabilisation form $s_h$.

\subsection{Abstract reliability analysis}

\begin{subequations}
Following \cite[Theorem 3.3]{ern_vohralik_2015a}, the discretisation error
may be bounded using two different terms that respectively control the equation
residual, and the non-conformity of the discrete solution. For the equation
residual, we introduce the residual functional and its norm
\begin{equation}
\label{eq_definition_Rr}
\langle \LR_{\rm r}, v\rangle \eq b_h(e_h,v) \quad \forall v \in H^1_{\GD}(\Omega),
\qquad
\TR_{\rm r}
\eq
\sup_{\substack{v \in H^1_{\GD}(\Omega) \\ \|\grad v\|_{\BA,\Omega} = 1}}
|\langle \LR_{\rm r},v\rangle|.
\end{equation}
Notice that this first term only involves conforming test functions.
To account for the non-conformity of the discrete solution, we additionally
introduce
\begin{equation}
\label{eq_definition_Rc}
\TR_{\rm c} \eq \|u_h-\pig u_h\|_{\dagger,1,\CT_h}.
\end{equation}
The total abstract estimator is then the Hilbertian sum of the two above components:
\begin{equation}
\label{eq_definition_R}
\TR^2 \eq \TR_{\rm r}^2 + \TR_{\rm c}^2.
\end{equation}
\end{subequations}

Following \cite{chaumontfrelet_ern_vohralik_2021a,sauter_zech_2015a},
our a posteriori analysis follows the lines of the Schatz argument \cite{schatz_1974a}.
Specifically, we start by controlling weak norms of the errors with
the estimator and the approximation factor using a duality argument.

\begin{lemma}[Aubin-Nitsche]
The following estimates hold true:
\begin{equation}
\label{eq_aubin_nitsche_apost}
\omega \|u-u_h\|_{\mu,\CT_h} \leq \cgba \TR,
\qquad
\omega^{1/2} \|u-u_h\|_{\gamma,\CF_h^{\rm R}} \leq \tgba \TR.
\end{equation}
\end{lemma}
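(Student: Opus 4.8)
The plan is to mirror the a priori duality argument of Lemma~\ref{lemma_aubin_nitsche}, but to replace the combination of Galerkin orthogonality and continuity used there by a direct appeal to the residual dual norm $\TR_{\rm r}$ and the non-conformity measure $\TR_{\rm c}$. Write $e_h \eq u-u_h$ and, for the first estimate, introduce the dual solution $\xi \in H^1_{\GD}(\Omega)$ defined by $b(w,\xi) = \omega(\mu w,e_h)_\Omega$ for all $w \in H^1_{\GD}(\Omega)$; this is exactly $u^\star_{e_h}$, so by linearity of the dual-solution map together with the normalizations in \eqref{eq_definition_gbag}--\eqref{eq_definition_gbad} one has $\|\xi-\pihg\xi\|_{\dagger,1,\CT_h} \leq \cgbag\|e_h\|_{\mu,\CT_h}$ and $\|\BA\grad\xi-\pihd(\BA\grad\xi)\|_{\dagger,\ddiv,\CT_h} \leq \cgbad\|e_h\|_{\mu,\CT_h}$. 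As in the proof of Lemma~\ref{lemma_aubin_nitsche}, the strong form of the dual problem, $-\omega^2\mu\xi-\div(\BA\grad\xi)=\omega\mu e_h$ in $L^2(\Omega)$ together with the Robin condition on $\GR$, shows that $\BA\grad\xi \in \BX_{\GN}(\ddiv,\Omega)$ and yields the representation
\begin{equation*}
\omega\|e_h\|_{\mu,\CT_h}^2 = b_h(e_h,\xi) - \left\{a_h(e_h,\xi) + (e_h,\div(\BA\grad\xi))_{\CT_h} - (e_h,\BA\grad\xi\cdot\bn)_{\CF_h^{\rm R}}\right\}.
\end{equation*}

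The braced term is precisely the quantity controlled by Corollary~\ref{corollary_nonconf}: choosing there $\widetilde\phi = u-\pig u_h$ (so that $e_h-\widetilde\phi = -(u_h-\pig u_h)$, hence $\|e_h-\widetilde\phi\|_{\dagger,1,\CT_h}=\TR_{\rm c}$) and $\bsig_h = \pihd(\BA\grad\xi)$, I bound it by $\cgbad\,\TR_{\rm c}\,\|e_h\|_{\mu,\CT_h}$. For the term $b_h(e_h,\xi)$ comes the one genuinely new ingredient relative to the a priori proof: since $\pihg\xi \in \CP_p(\CT_h)\cap H^1_{\GD}(\Omega)$, Galerkin orthogonality \eqref{eq_galerkin_orthogonality} gives $b_h(e_h,\xi)=b_h(e_h,\xi-\pihg\xi)$, and because $\xi-\pihg\xi \in H^1_{\GD}(\Omega)$ the definition \eqref{eq_definition_Rr} of $\TR_{\rm r}$ and the control \eqref{eq_norm_control} yield
\begin{equation*}
|b_h(e_h,\xi)| \leq \TR_{\rm r}\,\|\grad(\xi-\pihg\xi)\|_{\BA,\Omega} \leq \TR_{\rm r}\,\|\xi-\pihg\xi\|_{\dagger,1,\CT_h} \leq \cgbag\,\TR_{\rm r}\,\|e_h\|_{\mu,\CT_h}.
\end{equation*}
Collecting the two bounds, dividing by $\|e_h\|_{\mu,\CT_h}$, and applying the Cauchy--Schwarz inequality in $\R^2$ to $\cgbag\TR_{\rm r}+\cgbad\TR_{\rm c}$ gives $\omega\|e_h\|_{\mu,\CT_h} \leq \cgba\TR$, recalling $\cgba^2=\cgbag^2+\cgbad^2$ and $\TR^2=\TR_{\rm r}^2+\TR_{\rm c}^2$.

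The second estimate is obtained identically, with $\xi$ replaced by the dual solution $\chi = U^\star_{e_h}\in H^1_{\GD}(\Omega)$ solving $b(w,\chi)=\omega^{1/2}(\gamma w,e_h)_{\GR}$, whose data is now carried by the Robin boundary (here $e_h$ denotes its trace on $\GR$). The same representation and Corollary~\ref{corollary_nonconf} give $\omega^{1/2}\|e_h\|_{\gamma,\CF_h^{\rm R}}^2 \leq |b_h(e_h,\chi)| + \tgbad\,\TR_{\rm c}\,\|e_h\|_{\gamma,\CF_h^{\rm R}}$, while Galerkin orthogonality together with \eqref{eq_norm_control} gives $|b_h(e_h,\chi)| \leq \tgbag\,\TR_{\rm r}\,\|e_h\|_{\gamma,\CF_h^{\rm R}}$, using the normalizations in \eqref{eq_definition_gbag}--\eqref{eq_definition_gbad} with $\|e_h\|_{\gamma,\GR}$ in place of $\|e_h\|_{\mu,\CT_h}$; dividing and applying Cauchy--Schwarz yields $\omega^{1/2}\|e_h\|_{\gamma,\CF_h^{\rm R}} \leq \tgba\TR$.

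I expect the main difficulty to be bookkeeping rather than conceptual: verifying that $\BA\grad\xi$ and $\BA\grad\chi$ genuinely belong to $\BX_{\GN}(\ddiv,\Omega)$ so that Corollary~\ref{corollary_nonconf} applies, and tracking the scaling of the approximation factors under the non-normalized data $e_h$. The only structurally new point compared with Lemma~\ref{lemma_aubin_nitsche} is the substitution $b_h(e_h,\xi)=b_h(e_h,\xi-\pihg\xi)$ via Galerkin orthogonality; this is what lets the estimator component $\TR_{\rm r}$---rather than the stability constant $\gst$ that would otherwise enter through the size of the dual solution---appear in the final bound, and it is precisely why no positivity of $s_h$ and no restriction on the mesh are needed here.
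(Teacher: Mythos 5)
Your proposal is correct and follows essentially the same route as the paper: the same dual solutions $\xi$ and $\chi$, the same representation of $\omega\|e_h\|_{\mu,\CT_h}^2$ via the strong form of the dual problem, the non-conformity term handled by Lemma \ref{lemma_lifting}/Corollary \ref{corollary_nonconf} with $\widetilde\phi = u-\pig u_h$ and $\bsig_h=\pihd(\BA\grad\xi)$, Galerkin orthogonality to insert $\pihg\xi$ and bring in $\TR_{\rm r}$, and the final Cauchy--Schwarz step $\cgbag\TR_{\rm r}+\cgbad\TR_{\rm c}\leq\cgba\TR$. The only cosmetic difference is that you invoke Corollary \ref{corollary_nonconf} where the paper applies the identity \eqref{eq_nonconf_identity} directly.
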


\begin{proof}
Let us set $e_h \eq u-u_h$.
We start with the first estimate in \eqref{eq_aubin_nitsche_apost}.
We define $\xi$ as the unique element of $H^1_{\GD}(\Omega)$ such that
$b(w,\xi) = \omega (\mu w,e_h)_{\CT_h}$ for all $w \in H^1_{\GD}(\Omega)$.
Following the lines the proof of Lemma \ref{lemma_aubin_nitsche},
we write
\begin{align*}
b_h(e_h,\xi)
&=
-\omega^2(\mu e_h,\xi)_{\CT_h}
-i\omega(\gamma e_h,\xi)_{\CF_h^{\rm R}}
+(\GRAD(e_h),\BA\grad \xi)_{\CT_h}
\\
&=
(e_h,-\omega^2 \mu\xi-\div(\BA\grad \xi))_{\CT_h}
-(e_h,\BA\grad\xi\cdot\bn)_{\CF_h^{\rm R}}+
(e_h,\div(\BA\grad \xi))_{\revision{\CT_h}}+(\GRAD(e_h),\BA\grad \xi)_{\CT_h}
\\
&=
\omega \|e_h\|_{\mu,\CT_h}^2
+
(\GRAD(\pig u_h -u_h),\revision{\BA\grad\xi}-\bsig_h)_{\CT_h}
\\
&+
(\pig u_h-u_h,\div(\revision{\BA\grad\xi}-\bsig_h))_{\CT_h}
\revision{
-
(\pig u_h-u_h,(\BA\grad\xi-\bsig_h)\cdot \bn)_{\CF_h^{\rm R}}
},
\end{align*}
for all $\bsig_h \in \BCP_p(\CT_h) \cap \revision{\BX_{\GN}}(\ddiv,\Omega)$,
where we employed \eqref{eq_nonconf_identity} in the last identity. It follows that
\begin{equation*}
\omega \|e_h\|_{\mu,\CT_h}^2
\leq
|b_h(e_h,\xi)| + \|u_h-\pig u_h\|_{\dagger,1,\CT_h}\|\BA\grad \xi-\bsig_h\|_{\dagger,\ddiv,\CT_h}.
\end{equation*}
On the one hand, recalling the Galerkin orthogonality property stated
in \eqref{eq_galerkin_orthogonality} and the definition of $\TR_{\rm r}$
at \eqref{eq_definition_Rr}, we have
\begin{equation*}
|b_h(e_h,\xi)|
=
|b_h(e_h,\xi-\pi_h^{\rm g}\xi)|
\leq
\TR_{\rm r} \|\grad(\xi-\pi_h^{\rm g}\xi)\|_{\BA,\Omega}
\leq
\revision{\cgbag} \TR_{\rm r} \|e_h\|_{\mu,\CT_h}.
\end{equation*}
On the other hand, recalling \eqref{eq_definition_gbad} and \eqref{eq_definition_Rc}, we have
\begin{equation*}
\|u_h-\pig u_h\|_{\dagger,1,\CT_h}\|\BA\grad \xi-\pi_h^{\rm d}(\BA\grad \xi)\|_{\dagger,\ddiv,\CT_h}
\leq
\revision{\cgbad}\TR_{\rm c}\|e_h\|_{\mu,\CT_h}.
\end{equation*}
Then, the first estimate of \eqref{eq_aubin_nitsche_apost} follows from \eqref{eq_definition_R}
since
\begin{equation*}
\revision{\cgbag} \TR_{\rm r} + \revision{\cgbad} \TR_{\rm c}
\leq
\left (
\revision{\cgbag^{{\color{black} 2}}} + \revision{\cgbad^{{\color{black} 2}}}
\right )^{1/2}
\left (
\TR_{\rm r}^2
+
\TR_{\rm c}^2
\right )^{1/2}
=
\cgba \TR.
\end{equation*}

For the sake of shortness, we do not detail the proof of the second estimate
in \eqref{eq_aubin_nitsche_apost} as it follows the lines of the first estimate
but using the function $\chi$ from the proof of Lemma \ref{lemma_aubin_nitsche}
instead of $\xi$.
\end{proof}

We can now conclude the abstract reliability analysis.
Following \cite{ern_vohralik_2015a}, our proof uses a Pythagorean
identity to separate the conforming and non-conforming parts of the error.

\begin{theorem}[Abstract reliability]
We have
\begin{equation}
\label{eq_abstract_reliability}
\enorm{u-u_h}_{\omega,\CT_h}
\leq
\sqrt{1+\gba^2} \TR.
\end{equation}
\end{theorem}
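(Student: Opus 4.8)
The plan is to adapt the Schatz argument to the non-conforming setting by first peeling off the non-conformity through a Pythagorean identity in the energy inner product, in the spirit of \cite{ern_vohralik_2015a}. Write $e_h \eq u-u_h$ and let $(\!(\phi,\psi)\!)_{\omega,\CT_h} \eq \omega^2(\mu\phi,\psi)_{\CT_h} + \omega(\gamma\phi,\psi)_{\CF_h^{\rm R}} + (\BA\GRAD(\phi),\GRAD(\psi))_{\CT_h}$ denote the Hermitian inner product on $H^1(\CT_h)$ attached to $\enorm{\cdot}_{\omega,\CT_h}^2$. I would introduce the conforming reconstruction $z \eq \arg\min_{s \in H^1_{\GD}(\Omega)} \enorm{u_h-s}_{\omega,\CT_h}$, which is well-defined since $H^1_{\GD}(\Omega)$ is a closed subspace of $(H^1(\CT_h),\enorm{\cdot}_{\omega,\CT_h})$. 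Its optimality yields the orthogonality $(\!(u_h-z,s)\!)_{\omega,\CT_h}=0$ for all $s\in H^1_{\GD}(\Omega)$, and since $u-z\in H^1_{\GD}(\Omega)$, decomposing $e_h=(u-z)-(u_h-z)$ produces the Pythagorean identity
\begin{equation*}
\enorm{e_h}_{\omega,\CT_h}^2 = \enorm{u-z}_{\omega,\Omega}^2 + \enorm{u_h-z}_{\omega,\CT_h}^2.
\end{equation*}

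The non-conforming term is controlled at once: by optimality of $z$, using $\pig u_h \in H^1_{\GD}(\Omega)$ as a competitor together with \eqref{eq_norm_control} and \eqref{eq_definition_Rc}, one gets $\enorm{u_h-z}_{\omega,\CT_h}\leq \enorm{u_h-\pig u_h}_{\omega,\CT_h}\leq \|u_h-\pig u_h\|_{\dagger,1,\CT_h}=\TR_{\rm c}$.

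The heart of the argument is to estimate the conforming term \emph{without} bounding the weak norms of $u-z$ directly, and the key is to re-use the orthogonality to transfer everything onto $e_h$. Indeed $\enorm{u-z}_{\omega,\Omega}^2 = (\!(u-z,u-z)\!)_{\omega,\CT_h} = (\!(e_h,u-z)\!)_{\omega,\CT_h}$. Since $u-z$ is conforming, $\GRAD(u-z)=\grad(u-z)$ and $(\BA\GRAD(e_h),\grad(u-z))_{\CT_h}=a_h(e_h,u-z)$; expanding $a_h(e_h,u-z)=b_h(e_h,u-z)+\omega^2(\mu e_h,u-z)_{\CT_h}+i\omega(\gamma e_h,u-z)_{\CF_h^{\rm R}}$ and collecting the weak contributions gives the decisive identity
\begin{equation*}
\enorm{u-z}_{\omega,\Omega}^2 = 2\omega^2(\mu e_h,u-z)_{\CT_h} + (1+i)\omega(\gamma e_h,u-z)_{\CF_h^{\rm R}} + b_h(e_h,u-z),
\end{equation*}
in which only weak norms of $e_h$ and the residual survive.

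It remains to bound the three terms and combine them Hilbertian-wise. The residual term obeys $|b_h(e_h,u-z)|\leq \TR_{\rm r}\|\grad(u-z)\|_{\BA,\Omega}$ by \eqref{eq_definition_Rr}, while the Aubin--Nitsche estimates \eqref{eq_aubin_nitsche_apost} give $2\omega^2|(\mu e_h,u-z)_{\CT_h}|\leq 2\cgba\TR\,\omega\|u-z\|_{\mu,\CT_h}$ and $|1+i|\,\omega|(\gamma e_h,u-z)_{\CF_h^{\rm R}}|\leq \sqrt{2}\,\tgba\TR\,\omega^{1/2}\|u-z\|_{\gamma,\CF_h^{\rm R}}$. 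A single Cauchy--Schwarz inequality applied to the triple $\big(\omega\|u-z\|_{\mu,\CT_h},\,\omega^{1/2}\|u-z\|_{\gamma,\CF_h^{\rm R}},\,\|\grad(u-z)\|_{\BA,\Omega}\big)$, whose Euclidean length is exactly $\enorm{u-z}_{\omega,\Omega}$, combined with $\gba^2 = 4\cgba^2+2\tgba^2$ from \eqref{eq_definition_gba}, yields $\enorm{u-z}_{\omega,\Omega}^2 \leq \gba^2\TR^2 + \TR_{\rm r}^2$. Inserting this and the non-conformity bound into the Pythagorean identity and using $\TR^2=\TR_{\rm r}^2+\TR_{\rm c}^2$ gives $\enorm{e_h}_{\omega,\CT_h}^2\leq \gba^2\TR^2+\TR_{\rm r}^2+\TR_{\rm c}^2=(1+\gba^2)\TR^2$, which is \eqref{eq_abstract_reliability}. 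I expect the main obstacle to be the third step: realizing that the orthogonality is exploited twice, first to split the error and then to rewrite the conforming energy as $(\!(e_h,u-z)\!)_{\omega,\CT_h}$, thereby reducing everything to weak norms of $e_h$ that the Aubin--Nitsche lemma controls; the bookkeeping that makes the coefficients $2$ and $1+i$ combine, after the final Cauchy--Schwarz, into precisely $4\cgba^2+2\tgba^2=\gba^2$ is delicate but automatic once the identity is in place.
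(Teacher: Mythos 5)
Your proof is correct and follows essentially the same route as the paper: the same Pythagorean splitting with respect to the energy inner product (your $z$ is the paper's $\widetilde u$, defined there by the orthogonality condition rather than as an arg-min, which is equivalent), the same bound $\enorm{u_h-z}_{\omega,\CT_h}\leq \TR_{\rm c}$ via $\pig u_h$, and the same re-use of orthogonality to write $\enorm{u-z}_{\omega,\Omega}^2=b_h^+(e_h,u-z)$ before invoking the a posteriori Aubin--Nitsche estimates and a single Cauchy--Schwarz. The only cosmetic discrepancy is the factor $(1+i)$ versus the paper's $(1-i)$ on the Robin term, which is immaterial since only its modulus $\sqrt{2}$ enters the bound.
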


\begin{proof}
For the sake of simplicity, we introduce
\begin{equation*}
b^+_h(\phi,v)
\eq
\omega^2 (\mu\phi,v)_{\CT_h}
+
\omega (\gamma\phi,v)_{\CF_h^{\rm R}}
+
(\BA\GRAD(\phi),\GRAD(v))_{\CT_h}
\qquad
\forall \phi,v \in H^1(\CT_h),
\end{equation*}
and we define $\widetilde u$ as the only element of $H^1_{\GD}(\Omega)$ such that
\begin{equation*}
b^+_h(\widetilde u-u_h,v) = 0 \qquad \forall v \in H^1_{\GD}(\Omega).
\end{equation*}
Notice that this indeed a well-formed definition, since $b^+_h$ is equivalent
to the usual inner-product of $H^1_{\GD}(\Omega)$. Classically, we have the Pythagorean identity
\begin{align}
\nonumber
\enorm{u-u_h}_{\omega,\CT_h}^2
&=
\enorm{\widetilde u-u_h}_{\omega,\CT_h}^2
+
2\Re b^+(\widetilde u - u_h,u-\widetilde u)
+
\enorm{u-\widetilde u}_{\omega,\CT_h}^2
\\
\label{tmp_rel1}
&=
\enorm{u_h-\widetilde u}_{\omega,\CT_h}^2
+
\enorm{u-\widetilde u}_{\omega,\CT_h}^2.
\end{align}
Then, on the one hand, it is clear using \eqref{eq_norm_control} that
\begin{equation}
\label{tmp_rel2}
\enorm{u_h-\widetilde u}_{\omega,\CT_h}
\leq
\enorm{u_h-\pig u_h}_{\omega,\CT_h}
\leq
\|u_h-\pig u_h\|_{\dagger,1,\CT_h}
=
\TR_{\rm c},
\end{equation}
and on the other hand, we have
\begin{align*}
\enorm{u-\widetilde u}_{\omega,\Omega}^2
&=
b^+_h(u-\widetilde u,u-\widetilde u)
=
b^+_h(e_h,u-\widetilde u)
\\
&=
b_h(e_h,u-\widetilde u)
+
2\omega^2(\mu e_h,u-\widetilde u)_{\CT_h}
+
(1-i)\omega (\gamma e_h,u-\widetilde u)_{\CF_h^{\rm R}}
\\
&\leq
\TR_{\rm r}
\|\grad(u-\widetilde u)\|_{\BA,\Omega}
+
2\cgba \TR \omega\|u-\widetilde u\|_{\mu,\CT_h}
+
\sqrt{2}\tgba \TR \omega^{\revision{1/2}}\|u-\widetilde u\|_{\gamma,\CF_h^{\rm R}}
\\
&\leq
\left (
\TR_{\rm r}^2
+
(4\cgba^2 + 2\tgba^2) \TR^2
\right )^{1/2}
\enorm{u-\widetilde u}_{\omega,\Omega},
\end{align*}
and recalling \eqref{eq_definition_gba_summed}, we arrive at
\begin{equation}
\label{tmp_rel3}
\enorm{u-\widetilde u}_{\omega,\Omega}^2
\leq
\TR_{\rm r}^2
+
(4\cgba^2+2\tgba^2) \TR^2
=
\TR_{\rm r}^2
+
\gba^2 \TR^2.
\end{equation}
The estimate in \eqref{eq_abstract_reliability} then follows from
\eqref{tmp_rel1}, \eqref{tmp_rel2} and \eqref{tmp_rel3}.
\end{proof}

\subsection{Residual-based a posteriori estimator}

The residuals $\TR_{\rm r}$ and $\TR_{\rm c}$ can be straightforwardly
controlled using flux and potential reconstructions. We refer the reader
to \cite{congreve_gedicke_perugia_2019a,ern_vohralik_2015a} for details
about these constructions. Here, we focus instead on residual-based
estimators, for which the link may be less clear. In this section, the
letter $C$ refers to a generic constant that may change from one occurrence
to the other, and that only depends on the contrasts of the coefficients
$\mu$, $\BA$ and $\gamma$ as well as the polynomial degree $p$ and the
shape-regularity parameter $\kappa$.
The stability result for the lifting operator
\begin{equation}
\label{eq_stability_lifting}
\|\LIFT{v_h}\|_{\BA,\CT_h}^2
\leq
C
\sum_{K \in \CT_h} \frac{\alpha_K}{h_K} \left \|\jmp{v_h}\right \|_{\partial K}^2
\quad
\forall v_h \in \CP_p(\CT_h)
\end{equation}
can be found, e.g., in \cite[Section 4.3]{dipietro_ern_2012a}
or \cite[Lemma 4.1]{houston_schotzau_wihler_2006a}.

We first establish that $\TR_{\rm c}$ can be controlled by the jumps of $u_h$. The notations
\begin{equation*}
\frac{\omega h_F^\star}{\vel_F^\star}
\eq
\max_{F \in \CF_h^{\rm R}} \frac{\omega h_F}{\vel_F},
\qquad
\frac{\omega h_K^\star}{\vel_K^\star}
\eq
\max_{K \in \CT_h} \frac{\omega h_K}{\vel_K},
\end{equation*}
will be useful.

\begin{lemma}[Control of the non-conformity]
We have
\begin{equation}
\label{eq_estimate_Rc}
\TR_{\rm c}^2
\leq
C
\max \left (
1,\frac{\omega h_F^\star}{\vel_F^\star},\left (\frac{\omega h_K^\star}{\vel_K^\star}\right )^2
\right )
\sum_{K \in \CT_h} \frac{\alpha_K}{h_K} \|\jmp{u_h}\|_{\partial K \setminus (\GN \cup \GR)}^2.
\end{equation}
\end{lemma}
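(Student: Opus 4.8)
The plan is to exploit the minimality built into the conforming projector $\pig$. Since $\pig u_h$ is by definition the best $\|\cdot\|_{\dagger,1,\CT_h}$-approximation of $u_h$ in $\CP_p(\CT_h) \cap H^1_{\GD}(\Omega)$, we have
\begin{equation*}
\TR_{\rm c} = \|u_h - \pig u_h\|_{\dagger,1,\CT_h} \leq \|u_h - s\|_{\dagger,1,\CT_h}
\end{equation*}
for \emph{every} conforming candidate $s \in \CP_p(\CT_h) \cap H^1_{\GD}(\Omega)$, so it suffices to produce one such $s$ whose distance to $u_h$ is controlled by the jumps. The canonical choice is the averaging (Oswald--Karakashian--Pascal) interpolant $s \eq \CI_{\rm av} u_h$, obtained by assigning to each interior Lagrange node the average of the values of $u_h$ taken from the surrounding elements and by setting the value to zero at nodes lying on $\overline{\GD}$; this produces $s \in \CP_p(\CT_h) \cap H^1_{\GD}(\Omega)$, and the standard local estimates (see e.g. \cite[Section 4.3]{dipietro_ern_2012a} or \cite[Lemma 4.1]{houston_schotzau_wihler_2006a}) yield, after inserting the piecewise-constant weights $\alpha_K$ and absorbing the coefficient contrasts into $C$,
\begin{equation*}
\sum_{K \in \CT_h} \left( \frac{\alpha_K}{h_K^2}\|u_h - s\|_K^2 + \alpha_K\|\grad(u_h - s)\|_K^2 \right) + \sum_{F \in \CF_h^{\rm R}} \frac{\alpha_F}{h_F}\|u_h - s\|_F^2 \leq C \sum_{K \in \CT_h} \frac{\alpha_K}{h_K}\|\jmp{u_h}\|_{\partial K \setminus (\GN \cup \GR)}^2,
\end{equation*}
where the face contribution follows from the volume ones through a discrete trace inequality, and where only the jumps across interior and Dirichlet faces enter since $\jmp{u_h}$ vanishes on $\GN \cup \GR$ by definition.

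Next I would expand $\|u_h - s\|_{\dagger,1,\CT_h}^2$ into its three groups of terms and bound each against the displayed right-hand side. For the element $L^2$ contribution I pull the frequency weight out of the sum, $\sum_K \max(1, \omega^2 h_K^2/\vel_K^2) \tfrac{\alpha_K}{h_K^2}\|u_h - s\|_K^2 \leq \max\!\left(1, (\omega h_K^\star/\vel_K^\star)^2\right) \sum_K \tfrac{\alpha_K}{h_K^2}\|u_h - s\|_K^2$, and likewise for the Robin-face contribution, $\sum_{F \in \CF_h^{\rm R}} \max(1, \omega h_F/\vel_F)\tfrac{\alpha_F}{h_F}\|u_h - s\|_F^2 \leq \max(1, \omega h_F^\star/\vel_F^\star)\sum_{F \in \CF_h^{\rm R}}\tfrac{\alpha_F}{h_F}\|u_h - s\|_F^2$; the remaining sums are then controlled by the displayed estimate. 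These two groups already account for the factors $(\omega h_K^\star/\vel_K^\star)^2$ and $\omega h_F^\star/\vel_F^\star$ appearing in \eqref{eq_estimate_Rc}.

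The only subtlety lies in the gradient group, since the norm uses the \emph{discrete} gradient $\GRAD$ rather than the broken one. Because $s$ is conforming we have $\jmp{u_h - s} = \jmp{u_h}$, hence $\GRAD(u_h - s) = \grad(u_h - s) - \LIFT{u_h}$, and the triangle inequality gives $\sum_K \|\GRAD(u_h - s)\|_{\BA,K}^2 \leq 2\|\grad(u_h - s)\|_{\BA,\CT_h}^2 + 2\|\LIFT{u_h}\|_{\BA,\CT_h}^2$. The first term is handled by the gradient part of the displayed estimate, while the second is exactly what the lifting stability bound \eqref{eq_stability_lifting} controls by $C \sum_K \tfrac{\alpha_K}{h_K}\|\jmp{u_h}\|_{\partial K}^2$; neither carries a frequency weight, so both fit under the factor $1$ of the global maximum. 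Collecting the three groups and merging the three separate weights into the single global maximum $\max(1, \omega h_F^\star/\vel_F^\star, (\omega h_K^\star/\vel_K^\star)^2)$ yields \eqref{eq_estimate_Rc}. I expect the main obstacle to be the construction and analysis of the averaging interpolant $\CI_{\rm av}$ with the correct boundary enforcement (vanishing on $\overline{\GD}$, no constraint on $\GN \cup \GR$) together with the weighted, jump-based approximation estimate; this is classical but must be matched carefully to the definition of $\jmp{\cdot}$ on boundary faces, after which the splitting of $\GRAD$ via \eqref{eq_stability_lifting} is routine.
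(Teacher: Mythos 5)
Your proposal is correct and follows essentially the same route as the paper: bound $\TR_{\rm c}$ by the distance from $u_h$ to a conforming averaging interpolant, use the standard local jump-based estimates plus a trace inequality for the face terms, and pull the frequency weights out as the global maximum (your only harmless imprecision is describing $\pig$ as the projector onto the \emph{discrete} conforming space, whereas the paper minimizes over all of $H^1_{\GD}(\Omega)$ — the needed inequality holds either way). In fact you are slightly more careful than the paper on one point: you explicitly split $\GRAD(u_h-s)=\grad(u_h-s)-\LIFT{u_h}$ and invoke \eqref{eq_stability_lifting} for the lifting contribution, a step the paper subsumes under ``it is therefore clear that.''
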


\begin{proof}
We start with two estimates that are easily inferred from
\cite[Lemma 4.3]{ern_guermond_2017a}. For all $v_h \in \CP_p(\CT_h)$,
there exists a conforming approximation $\CJ v_h \in \CP_p(\CT_h) \cap H^1_{\GD}(\Omega)$
such that and all $K \in \CT_h$, we have
\begin{equation*}
\|v_h-\CJ v_h\|_K^2
\leq
C h_K \sum_{K' \in \widetilde \CT_K} \|\jmp{v_h}\|_{\partial K'}^2,
\qquad
\|\grad(v_h-\CJ v_h)\|_K^2
\leq
C \frac{1}{h_K} \sum_{K' \in \widetilde \CT_K} \|\jmp{v_h}\|_{\partial K'}^2,
\end{equation*}
where $\widetilde \CT_K$ collects those elements $K' \in \CT_h$ sharing at least
one vertex with $K$. Employing the multiplicative trace inequality
\begin{equation*}
\|\theta\|_{\partial K}^2
\leq
C \left \{
\frac{1}{h_K}\|\theta\|_K^2 + \|\theta\|_K\|\grad \theta\|_K
\right \}
\leq
C \left \{
\frac{1}{h_K}\|\theta\|_K^2 + h_K\|\grad \theta\|_K^2
\right \}
\qquad
\forall \theta \in H^1(K),
\end{equation*}
we also have
\begin{equation*}
\|v_h-\CJ v_h\|_{\partial K}^2
\leq
C \sum_{K' \in \widetilde \CT_K} \|\jmp{v_h}\|_{\partial K'}^2.
\end{equation*}
It is therefore clear that
\begin{equation*}
\|u_h-\CJ u_h\|_{\dagger,1,\CT_h}^2
\leq
C \max \left (
1,\frac{\omega h_F^\star}{\vel_F^\star},\left (\frac{\omega h_K^\star}{\vel_K^\star}\right )^2
\right )
\sum_{K \in \CT_h} \frac{\alpha_K}{h_K} \|\jmp{u_h}\|_{\partial K}^2,
\end{equation*}
and \eqref{eq_estimate_Rc} follows from the definition of $\TR_{\rm c}$ in \eqref{eq_definition_Rc}.
\end{proof}

We then control the conforming residual $\TR_{\rm r}$.
This is classically done by combining element-wise integration
by parts and a quasi-interpolation operator.

\begin{lemma}[Control of the residual]
We have
\begin{multline}
\label{eq_estimate_Rr}
\TR_{\rm r}^2
\leq
C
\Bigg \{
\sum_{K \in \CT_h}
\left (
\frac{h_K^2}{\alpha_K}
\|f+\omega^2 \mu u_h + \div (\BA\grad u_h)\|_{\CT_h}^2
+
\frac{h_K}{\alpha_K}
\|\jmp{\BA\grad u_h} \cdot \bn_K\|_{\partial K \setminus \partial \Omega}^2
+
\frac{\alpha_K}{h_K}
\|\jmp{u_h}\|_{\partial K \setminus \partial \Omega}^2
\right )
\\
+\sum_{F \in \CF_h^{\rm D}} \frac{\alpha_F}{h_F} \|u_h\|_F^2
+\sum_{F \in \CF_h^{\rm N}} \frac{h_F}{\alpha_F} \|\BA \grad u_h \cdot \bn\|_F^2
+\sum_{F \in \CF_h^{\rm R}} \frac{h_F}{\alpha_F} \|\BA\grad u_h \cdot \bn-i\omega\gamma u_h\|_F^2
\Bigg \}.
\end{multline}
\end{lemma}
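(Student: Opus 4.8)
The plan is to estimate $\langle\LR_{\rm r},v\rangle = b_h(e_h,v)$ for an arbitrary $v \in H^1_{\GD}(\Omega)$ with $\|\grad v\|_{\BA,\Omega}=1$, and then to take the supremum. The key device is the Galerkin orthogonality \eqref{eq_galerkin_orthogonality}: letting $\CJ v \in \CP_p(\CT_h)\cap H^1_{\GD}(\Omega)$ be a conforming Scott--Zhang type quasi-interpolant (the same operator used in the previous lemma, cf. \cite[Lemma 4.3]{ern_guermond_2017a}), we have $b_h(e_h,\CJ v)=0$, whence $\langle\LR_{\rm r},v\rangle = b_h(e_h,w)$ with $w\eq v-\CJ v \in H^1_{\GD}(\Omega)$. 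I would use the local bounds $\|w\|_K \leq C h_K\|\grad v\|_{\widetilde\CT_K}$ and $\|w\|_{\partial K} \leq C h_K^{1/2}\|\grad v\|_{\widetilde\CT_K}$, where $\widetilde\CT_K$ is the patch of elements sharing a vertex with $K$, together with the $H^1$-stability $\|\grad w\|_{\BA,\CT_h} \leq C\|\grad v\|_{\BA,\Omega}$.

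First I would expand $b_h(e_h,w) = (f,w)_\Omega - b_h(u_h,w)$, evaluating the $u$-contribution through consistency \eqref{eq_consistency} and the weak problem \eqref{eq_helmholtz_weak}. Since $w$ is conforming, the stabilization vanishes and $\GRAD(w)=\grad w$, so that $a_h(u_h,w)=(\BA\grad u_h,\grad w)_{\CT_h}-(\BA\LIFT{u_h},\grad w)_{\CT_h}$ after substituting $\GRAD(u_h)=\grad u_h-\LIFT{u_h}$. Integrating the first piece by parts element-by-element, and using that $w$ is single-valued across interior faces and vanishes on $\GD$, collects the element residual $f+\omega^2\mu u_h+\div(\BA\grad u_h)$, the interior flux jumps $\jmp{\BA\grad u_h}\cdot\bn_F$, and the boundary residuals $\BA\grad u_h\cdot\bn$ on $\CF_h^{\rm N}$ and $\BA\grad u_h\cdot\bn-i\omega\gamma u_h$ on $\CF_h^{\rm R}$; the Dirichlet faces contribute nothing at this stage because $w|_{\GD}=0$. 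The identity thus obtained matches the terms of \eqref{eq_estimate_Rr} up to the leftover lifting term $(\BA\LIFT{u_h},\grad w)_{\CT_h}$.

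Each bulk and face term is then estimated by Cauchy--Schwarz followed by the quasi-interpolation bounds, which trade $\|w\|_K$ and $\|w\|_{\partial K}$ for the mesh weights $h_K\alpha_K^{-1/2}$ and $(h_F/\alpha_F)^{1/2}$ against $\|\grad v\|_{\BA,\widetilde\CT_K}$, using $\alpha_K^{1/2}\|\grad v\|_K \leq \|\grad v\|_{\BA,K}$; summing over cells and invoking the finite overlap of the patches (controlled by $\kappa$) produces the first four groups in \eqref{eq_estimate_Rr}. For the remaining lifting term I would write $(\BA\LIFT{u_h},\grad w)_{\CT_h} \leq \|\LIFT{u_h}\|_{\BA,\CT_h}\|\grad w\|_{\BA,\CT_h}$, bound $\|\grad w\|_{\BA,\CT_h}\leq C$ by stability of $\CJ$, and control $\|\LIFT{u_h}\|_{\BA,\CT_h}^2$ through the lifting stability \eqref{eq_stability_lifting} by $\sum_K\frac{\alpha_K}{h_K}\|\jmp{u_h}\|_{\partial K}^2$. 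The main point to watch, rather than the routine residual bookkeeping, is precisely this last term: by the jump conventions $\jmp{u_h}_F=u_h|_F$ on $\CF_h^{\rm D}$ and $\jmp{u_h}_F=0$ on $\CF_h^{\rm N}\cup\CF_h^{\rm R}$, the boundary sum in \eqref{eq_stability_lifting} splits exactly into the interior term $\frac{\alpha_K}{h_K}\|\jmp{u_h}\|_{\partial K\setminus\partial\Omega}^2$ and the Dirichlet term $\frac{\alpha_F}{h_F}\|u_h\|_F^2$ of \eqref{eq_estimate_Rr}. This is how the nonconformity of $u_h$, which disappears after integration by parts against the conforming $w$, re-enters the bound. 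Dividing by $\|\grad v\|_{\BA,\Omega}=1$, taking the supremum and squaring then yields \eqref{eq_estimate_Rr}, with $C$ depending only on the coefficient contrasts, $p$ and $\kappa$.
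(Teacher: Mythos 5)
Your proposal is correct and follows essentially the same route as the paper: Galerkin orthogonality against a conforming quasi-interpolant $w=v-\CQ_h v$, the split of $\BA\GRAD(u_h)$ into $\BA\grad u_h-\BA\LIFT{u_h}$, element-wise integration by parts for the first piece, and the lifting stability bound \eqref{eq_stability_lifting} for the second, which is exactly where the interior jump and Dirichlet terms re-enter. The only small imprecision is that the operator needed here is a quasi-interpolant defined on all of $H^1_{\GD}(\Omega)$ (the paper uses $\CQ_h$ from \cite[Theorem 5.2]{ern_guermond_2017a}), not the averaging map on broken polynomials from \cite[Lemma 4.3]{ern_guermond_2017a} used in the preceding lemma; since you state precisely the local approximation, trace and $H^1$-stability properties that such an operator provides, the argument goes through unchanged.
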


\begin{proof}
We consider an arbitrary $v \in H^1_{\GD}(\Omega)$, and let
$\widetilde v \eq v-\CQ_h v \in H^1_{\GD}(\Omega)$, where $\CQ_h$
is a quasi-interpolation operator satisfying
\begin{equation*}
\frac{1}{h_K^2} \|v-\CQ_h v\|_K^2
+
\frac{1}{h_K}\|v-\CQ_h\revision{v}\|_{\partial K}^2
+
\|\grad(v-\CQ_h v)\|_K^2
\leq
C \frac{1}{\alpha_K} \|\grad v\|_{\BA,\widetilde K}^{\revision{2}},
\end{equation*}
with $\widetilde K$ the domain corresponding to all the elements $K' \in \CT_h$
that share at least one vertex with $K$. We refer the reader to, e.g.,
\cite[Theorem 5.2]{ern_guermond_2017a}, for the construction of $\CQ_h$.
Thanks to Galerkin orthogonality \eqref{eq_galerkin_orthogonality}, we
can write that
\begin{align}
\nonumber
\langle \LR_{\rm r}, v \rangle
&=
b_h(e_h,v)
=
b_h(e_h,\widetilde v)
\\
\nonumber
&=
(f,\widetilde v)_\Omega
+
\omega^2 (\mu u_h,\widetilde v)_{\CT_h}
+
i\omega (\gamma u_h,\widetilde v)_{\CF_h^{\rm R}}
\revision{-}
(\BA\GRAD(u_h),\grad \widetilde v)_{\CT_h}
\\
\label{tmp_rel_Rc0}
&=
\underbrace{(f,\widetilde v)_\Omega
+
\omega^2 (\mu u_h,\widetilde v)_{\CT_h}
+
i\omega (\gamma u_h,\widetilde v)_{\CF_h^{\rm R}}
-
(\BA\grad u_h,\grad \widetilde v)_{\CT_h}
}_{r_1}
\revision{+}
\underbrace{(\BA\LIFT{u_h},\grad \widetilde v)_{\CT_h}}_{r_2}.
\end{align}
Then, on the one hand, we see that
\begin{align*}
r_1
&=
(f+\omega^2\mu u_h+\div(\BA\grad u_h),\widetilde v)_{\CT_h}
+
i\omega(\gamma u_h,\widetilde v)_{\CF_h^{\rm R}}
-
\sum_{K \in \CT_h} (\BA \grad u_h \cdot \bn_K,\widetilde v)_{\partial K}
\\
&=
(f+\omega^2\mu u_h\revision{+}\div(\BA\grad u_h),\widetilde v)_{\CT_h}
\revision{-}
\sum_{F \in \CF_h^{\rm i}} (\jmp{\BA \grad u_h} \cdot \bn_F,\widetilde v)_{F}
\\
&
-(\BA \grad u_h \cdot \bn,\widetilde v)_{\CF_h^{\rm N}}
-(\BA \grad u_h \cdot \bn - i\omega\gamma u_h,\widetilde v)_{\CF_h^{\rm R}},
\end{align*}
and therefore,
\begin{align}
\nonumber
|r_1|
&\leq
\sum_{K \in \CT_h}
\left (
\|f+\omega^2\mu u_h+\div\left(\BA\grad u_h\right )\|_K
\|\widetilde v\|_K
+
\|\jmp{\revision{\BA}\grad u_h}\cdot \bn_K\|_{\partial K}\|\widetilde v\|_{\partial K}
\right )
\\
\nonumber
&
+\sum_{F \in \CF_h^{\rm N}}\|\BA\grad u_h\cdot \bn\|_F\|\widetilde v\|_F
+\sum_{F \in \CF_h^{\rm R}}\|\BA\grad u_h\cdot \bn-i\omega\gamma u_h\|_F\|\widetilde v\|_F
\\
\nonumber
&\leq
C \Bigg \{
\sum_{K \in \CT_h}
\left (
\frac{h_K^2}{\alpha_K} \|f+\omega^2\mu u_h+\div\left(\BA\grad u_h\right )\|_K^{\revision{2}}
+
\frac{h_K}{\alpha_K}\|\jmp{\grad u_h}\cdot \bn_K\|_{\partial K}^{\revision{2}}
\right )
\\
\label{tmp_rel_Rc1}
&
\qquad
+\sum_{F \in \CF_h^{\rm N}}\frac{h_F}{\alpha_F}\|\BA\grad u_h\cdot \bn\|_F^{\revision{2}}
+\sum_{F \in \CF_h^{\rm R}}\frac{h_F}{\alpha_F}\|\BA\grad u_h\cdot \bn-i\omega\gamma u_h\|_F^{\revision{2}}
\Bigg \}^{\revision{1/2}} \|\grad v\|_{\BA,\Omega}.
\end{align}
On the other hand, we have
\begin{equation}
\label{tmp_rel_Rc2}
|r_2|
\leq
\|\LIFT{u_h}\|_{\BA,\Omega}\|\grad \widetilde v\|_{\BA,\Omega}
\leq
C \left \{
\sum_{K \in \CT_h} \frac{\alpha_K}{h_K} \|\jmp{u_h}\|_{\partial K \setminus \partial \Omega}^2
+
\sum_{F \in \CF_h^{\rm D}} \frac{\alpha_F}{h_F} \|\jmp{u_h}\|_F^2
\right \}^{\revision{1/2}}
\|\grad v\|_{\BA,\Omega},
\end{equation}
and \eqref{eq_estimate_Rr} follows from \eqref{tmp_rel_Rc0}, \eqref{tmp_rel_Rc1}
and \eqref{tmp_rel_Rc2} recalling the definition of $\TR_{\rm r}$ in \eqref{eq_definition_Rr}.
\end{proof}

We therefore define a residual-based estimator by
\begin{align*}
\eta^2
&\eq
\sum_{K \in \CT_h}
\left (
\frac{h_K^2}{\alpha_K}
\|f+\omega^2 \mu u_h \revision{+} \div (\BA\grad u_h)\|_{\CT_h}^2
+
\frac{h_K}{\alpha_K}
\|\jmp{\BA\grad u_h} \cdot \bn_K\|_{\partial K \setminus \partial \Omega}^2
+
\frac{\alpha_K}{h_K}
\|\jmp{u_h}\|_{\partial K \setminus \partial \Omega}^2
\right )
\\
&
+\sum_{F \in \CF_h^{\rm D}} \frac{\alpha_F}{h_F} \|u_h\|_F^2
+\sum_{F \in \CF_h^{\rm N}} \frac{h_F}{\alpha_F} \|\BA \grad u_h \cdot \bn\|_F^2
+\sum_{F \in \CF_h^{\rm R}} \frac{h_F}{\alpha_F} \|\BA\grad u_h \cdot \bn\revision{-i\omega\gamma u_h}\|_F^2.
\end{align*}
As a direct consequence of \eqref{eq_abstract_reliability},
\eqref{eq_estimate_Rc} and \eqref{eq_estimate_Rr}, we obtain a
reliability estimate stated in Theorem \ref{theorem_reliability_residual}.
It is to be compared with \cite[Theorem 2.3]{chaumontfrelet_ern_vohralik_2021a}
and \cite[Theorem 3.6]{sauter_zech_2015a}.

\begin{theorem}[Reliability of the residual estimator]
\label{theorem_reliability_residual}
We have
\begin{equation*}
\enorm{e_h}_{\omega,\CT_h}
\leq
C
\revision{
\left (
1
+
\left (\frac{\omega h_F^\star}{\vel_F^\star}\right )^{1/2}
+
\frac{\omega h_K^\star}{\vel_K^\star}
\right )}
(1+\gba) \eta.
\end{equation*}
\end{theorem}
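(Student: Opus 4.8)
The statement is announced as ``a direct consequence'' of the abstract reliability bound \eqref{eq_abstract_reliability} together with the two residual estimates \eqref{eq_estimate_Rr} and \eqref{eq_estimate_Rc}, and the plan is precisely to chain these three ingredients. The key structural observation is that, by construction, every summand on the right-hand sides of \eqref{eq_estimate_Rr} and \eqref{eq_estimate_Rc} is one of the terms defining $\eta^2$; consequently the whole argument reduces to identifying $\TR_{\rm r}$ and $\TR_{\rm c}$ with portions of $\eta$, tracking the frequency-dependent prefactor, and feeding the outcome into \eqref{eq_abstract_reliability}.

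First I would bound the conforming residual. Comparing \eqref{eq_estimate_Rr} term by term with the definition of $\eta^2$ --- the volumetric residual, the normal-flux jumps, and the Dirichlet, Neumann and Robin face terms all appear verbatim --- one reads off $\TR_{\rm r}^2 \leq C\eta^2$. Next I would treat the non-conformity residual. Here \eqref{eq_estimate_Rc} produces the extra prefactor $M \eq \max\bigl(1,\tfrac{\omega h_F^\star}{\vel_F^\star},(\tfrac{\omega h_K^\star}{\vel_K^\star})^2\bigr)$ multiplying the jump sum $\sum_{K}\tfrac{\alpha_K}{h_K}\|\jmp{u_h}\|_{\partial K\setminus(\GN\cup\GR)}^2$. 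Using $\jmp{u_h}_F = u_h|_F$ on Dirichlet faces and the shape-regularity equivalence $h_K \sim h_F$, this jump sum splits into the interior-face contribution $\tfrac{\alpha_K}{h_K}\|\jmp{u_h}\|_{\partial K\setminus\partial\Omega}^2$ and the Dirichlet-face contribution $\sum_{F\in\CF_h^{\rm D}}\tfrac{\alpha_F}{h_F}\|u_h\|_F^2$, both present in $\eta^2$. Hence $\TR_{\rm c}^2 \leq CM\eta^2$, and since $M \geq 1$ the same bound holds a fortiori for $\TR_{\rm r}^2$.

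It then remains to assemble. From \eqref{eq_definition_R} we get $\TR^2 = \TR_{\rm r}^2 + \TR_{\rm c}^2 \leq CM\eta^2$, whence $\TR \leq C\sqrt{M}\,\eta$. Since $\sqrt{M} = \max\bigl(1,(\tfrac{\omega h_F^\star}{\vel_F^\star})^{1/2},\tfrac{\omega h_K^\star}{\vel_K^\star}\bigr) \leq 1 + (\tfrac{\omega h_F^\star}{\vel_F^\star})^{1/2} + \tfrac{\omega h_K^\star}{\vel_K^\star}$, and since $\sqrt{1+\gba^2}\leq 1+\gba$, inserting this into \eqref{eq_abstract_reliability} via
\[
\enorm{e_h}_{\omega,\CT_h}
\leq
\sqrt{1+\gba^2}\,\TR
\leq
(1+\gba)\,C\sqrt{M}\,\eta
\]
yields exactly the claimed estimate. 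The main point is that there is no genuine obstacle left: the substantive work lives in the three cited results. The only care needed is the book-keeping in the previous paragraph --- matching $\|\jmp{u_h}\|_{\partial K\setminus(\GN\cup\GR)}$ to the separate interior- and Dirichlet-face norms in $\eta$ while absorbing the $h_K$-versus-$h_F$ mismatch into $C$ --- together with the two elementary scalar inequalities used to simplify $\sqrt{M}$ and $\sqrt{1+\gba^2}$.
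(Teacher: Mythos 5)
Your proposal is correct and follows exactly the route the paper intends: the paper gives no written proof, merely asserting the theorem is ``a direct consequence'' of \eqref{eq_abstract_reliability}, \eqref{eq_estimate_Rc} and \eqref{eq_estimate_Rr}, and your chaining of these three results (including the careful matching of the Dirichlet-face jump terms and the elementary bounds on $\sqrt{M}$ and $\sqrt{1+\gba^2}$) fills in precisely the omitted bookkeeping.
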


\subsection{Efficiency}

For the sake of \revision{completeness}, we \revision{state an efficiency estimate
for the residual-based estimator. The proof can be found in
\cite[Section 3.3]{sauter_zech_2015a}, and we also refer the reader to
\cite[Section 4]{congreve_gedicke_perugia_2019a}
for the corresponding result for equilibrated estimators.

Considering an element $K \in \CT_h$, we need a few additional notation.
The component of the error estimator associated with $K$ reads
\begin{align*}
\eta_K^2
&\eq
\frac{h_K^2}{\alpha_K}
\|f+\omega^2 \mu u_h \revision{+} \div (\BA\grad u_h)\|_{\CT_h}^2
+
\frac{h_K}{\alpha_K}
\|\jmp{\BA\grad u_h} \cdot \bn_K\|_{\partial K \setminus \partial \Omega}^2
+
\frac{\alpha_K}{h_K}
\|\jmp{u_h}\|_{\partial K \setminus \partial \Omega}^2
\\
&
+\sum_{\substack{F \in \CF_h^{\rm D} \\ F \subset \partial K}} \frac{\alpha_F}{h_F} \|u_h\|_F^2
+\sum_{\substack{F \in \CF_h^{\rm N} \\ F \subset \partial K}} \frac{h_F}{\alpha_F} \|\BA \grad u_h \cdot \bn\|_F^2
+\sum_{\substack{F \in \CF_h^{\rm R} \\ F \subset \partial K}} \frac{h_F}{\alpha_F} \|\BA\grad u_h \cdot \bn\revision{-i\omega\gamma u_h}\|_F^2.
\end{align*}
Then, to measure the discretization error locally around $K$, we introduce
\begin{equation*}
\enorm{u-u_h}_{\omega,\widetilde K}^2
=
\sum_{\substack{K' \in \CT_h \\ \partial K' \cap \partial K \neq \emptyset}}
\left \{
\omega^2 \|u-u_h\|_{\mu,K'}^2
+
\omega\|u-u_h\|_{\gamma,\GR \cap \partial K'}^2
+
\|\grad(u-u_h)\|_{\BA,K'}^2
\right \}.
\end{equation*}
Finally, the quantity
\begin{equation*}
\osc_{\widetilde K}^2
\eq
\sum_{\substack{K' \in \CT_h \\ \partial K' \cap \partial K \neq \emptyset}}
h_{K'}^2 \min_{f_p \in \CP_p(K')} \|f-f_p\|_{\mu,K'}^2
\end{equation*}
is usually called the ``data-oscillation term''. It is of higher-order
if $f$ is piecewise smooth.

\begin{theorem}[Efficiency of the residual estimator]
For all $K \in \CT_h$, we have
\begin{equation*}
\eta_K
\leq
C
\left \{
\left (
1
+
\max_{\substack{F \in \CF_h^{\rm R} \\ F \subset \partial K}}
\left ( \frac{\omega h_F}{\vel_F} \right )^{1/2}
+
\frac{\omega h_K}{\vel_K}
\right )
\enorm{u-u_h}_{\omega,\widetilde K}
+
\osc_{\widetilde K}
\right \}.
\end{equation*}
\end{theorem}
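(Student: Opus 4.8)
The plan is to establish the local lower bound by the classical residual/bubble-function technique, treating each contribution to $\eta_K$ separately and matching it to the local energy error $\enorm{u-u_h}_{\omega,\widetilde K}$ and to the oscillation $\osc_{\widetilde K}$; the detailed computations are those of \cite[Section 3.3]{sauter_zech_2015a}. A preliminary observation that makes the whole argument run smoothly is that, since $\mu$ and $\BA$ are piecewise constant on the mesh, the element residual $f+\omega^2\mu u_h+\div(\BA\grad u_h)$ differs from a polynomial only through $f$ minus its best polynomial approximation, while each face residual is already a polynomial. Thus all quantities to be estimated live, up to data oscillation, in polynomial spaces, where interior and face bubble functions together with inverse inequalities are available.

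First I would bound the element term. Writing $R_K \eq f_p+\omega^2\mu u_h+\div(\BA\grad u_h)\in\CP_p(K)$ with $f_p$ the $L^2(K)$-projection of $f$, I test against the bubble-weighted residual $b_K R_K$, which extends by zero to an element of $H^1_{\GD}(\Omega)$, use the polynomial norm equivalence $\|R_K\|_K^2\lesssim (R_K,b_K R_K)_K$, and substitute the strong form $f=-\omega^2\mu u-\div(\BA\grad u)$, valid in $L^2(K)$. Integrating the second-order term by parts (the bubble vanishes on $\partial K$) rewrites the right-hand side as $(f_p-f,b_KR_K)_K-\omega^2(\mu e_h,b_KR_K)_K+(\BA\grad e_h,\grad(b_KR_K))_K$. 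An inverse inequality gives $\|\grad(b_KR_K)\|_{\BA,K}\lesssim \alpha_K^{1/2}h_K^{-1}\|R_K\|_K$, and after dividing by $\|R_K\|_K$ and multiplying by $h_K\alpha_K^{-1/2}$ the three terms produce, respectively, the oscillation, the weighted mass error $\tfrac{\omega h_K}{\vel_K}\,\omega\|e_h\|_{\mu,K}$, and $\|\grad e_h\|_{\BA,K}$; this is where the factor $\omega h_K/\vel_K$ first appears.

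Next I would treat the face residuals with face bubbles $b_F$ supported on the one or two elements meeting $F$. For interior faces the normal-flux jump $\jmp{\BA\grad u_h}\cdot\bn_F$ is isolated by testing against its bubble-weighted extension and re-expanding $b_h(e_h,\cdot)$; the element residual just bounded is reused, leaving $\|\grad e_h\|_{\BA,\widetilde K}$. Neumann faces are identical once the exact condition $\BA\grad u\cdot\bn=0$ is invoked, so that $\BA\grad u_h\cdot\bn=\BA\grad e_h\cdot\bn$. For Robin faces I use $\BA\grad u\cdot\bn-i\omega\gamma u=0$, whence the residual equals $\BA\grad e_h\cdot\bn-i\omega\gamma e_h$; the boundary mass part $\omega\gamma e_h$ tested against the face bubble scales so as to yield precisely the half-power weight $(\omega h_F/\vel_F)^{1/2}$ multiplying $\omega^{1/2}\|e_h\|_{\gamma,\GR\cap\partial K}$. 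Finally, the non-conformity terms $\tfrac{\alpha_K}{h_K}\|\jmp{u_h}\|^2$ and $\tfrac{\alpha_F}{h_F}\|u_h\|_F^2$ are handled by noting that $u$ is single-valued, so $\jmp{u_h}=-\jmp{e_h}$, and vanishes on $\GD$, so $u_h|_F=-e_h|_F$, and then applying the multiplicative trace inequality on the adjacent elements, with the lifting stability \eqref{eq_stability_lifting} available as an alternative route.

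I expect the main obstacle to be neither the choice of test functions nor the integrations by parts, but the bookkeeping of the mesh- and frequency-dependent weights: one must verify that the contributions coming from the mass term, from the Robin boundary term, and from the trace estimates recombine into exactly the advertised prefactor $1+(\omega h_F/\vel_F)^{1/2}+\omega h_K/\vel_K$, and, in particular, that the auxiliary $L^2$-type terms generated by the trace inequalities on the jump and Dirichlet contributions are genuinely absorbed into the local energy norm rather than left with an unfavourable inverse power of $\omega h_K/\vel_K$. Keeping the coefficient contrasts, the polynomial degree $p$, and the shape-regularity constant $\kappa$ as the sole ingredients of the generic constant $C$ throughout forces one to exploit the piecewise-constant structure of $\mu$, $\BA$ and $\gamma$ at every step.
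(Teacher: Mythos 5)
Your proposal is correct and follows essentially the same route as the paper, which itself gives no proof and simply defers to the bubble-function argument of \cite[Section 3.3]{sauter_zech_2015a} that you outline (interior/face bubbles, inverse inequalities, the exact equation and boundary conditions, with the weights $\omega h_K/\vel_K$ and $(\omega h_F/\vel_F)^{1/2}$ emerging from the mass and Robin terms exactly as you describe). The only point to keep in mind when writing it out is that under minimal regularity the normal traces of $\BA\grad(u-u_h)$ on faces must be handled through the weak (integrated-by-parts) form rather than pointwise, but this is standard and does not change the argument.
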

}

\bibliographystyle{amsplain}
\bibliography{bibliography.bib}

\appendix

\revision
{
\section{Approximation factors}
\label{appendix_approximation_factors}

In this section, we provide an estimate for the approximation factor $\gba$
valid under minimal regularity assumptions. For the sake of simplicity and
shortness, we restrict our attention to the case where $\GR = \emptyset$.
The reason behind this choice is twofold. Obviously, we automatically have
$\tgbag = \tgbad = 0$ in this case, since there are no boundary right-hand
sides. But more importantly, the norm $\|{\cdot}\|_{\dagger,\ddiv,\CT_h}$
does not contain a boundary term. This allows us to easily use the quasi-interpolation
operators introduced in \cite{ern_gudi_smears_vohralik_2022a} to upper-bound $\gbad$
(see the estimates in \eqref{eq_interpolation} below). Notice that, in principle,
there is no obstruction to the construction of quasi-interpolation operators with
good interpolation properties on the boundary if they are defined on $\BX_{\GN}(\ddiv,\Omega)$.
However, the author is not aware of any convenient reference where such an operator can be found.
Alternatively, the canonical Raviart--Thomas interpolation operator has optimal approximation
properties on $\GR$, but it is not defined under minimal regularity (see e.g.
\cite{chaumontfrelet_2019a}).

Classically, the analysis of this section relies on (arbitrarily low) elliptic regularity shifts.
To properly state them, we need to introduce (broken) fractional Sobolev norms
\cite{adams_fournier_2003a}. First, for $K \in \CT_h$, $\bw \in \BL^2(\Omega)$,
and $s \in (0,1)$, we set
\begin{equation*}
|\bw|_{H^s(K)}^2
\eq
\int_K \int_K
\frac{|\bw(\bx)-\bw(\by)|^2}{|\bx-\by|^{2s+d}}d\by d\bx
\end{equation*}
and, for $s = 1$,
\begin{equation*}
|\bw|_{H^1(K)}^2 \eq \sum_{n=1}^d \|\grad \bw_n\|_K^2.
\end{equation*}
Then, the corresponding global norm reads
\begin{equation*}
|\bw|_{H^s(\CT_h)}^2
\eq
\sum_{K \in \CT_h} |\bw|_{H^s(K)}^2.
\end{equation*}

There exists $\theta \in (0,1]$ and such that such that all $v \in H^1_{\GD}(\Omega)$ with
$\BA\grad v \in \BX_{\GN}(\Omega)$, we have
\begin{equation}
\label{eq_elliptic_regularity}
|\BA\grad v|_{H^\theta(\CT_h)} + \alpha_{\max} |\grad v|_{H^\theta(\CT_h)}
\leq
C \ell^{1-\theta} \|\div(\BA\grad v)\|_\Omega,
\end{equation}
where $\ell$ is the diameter of $\Omega$, see e.g. \cite{jochamnn_1999a}.
For generic values of $\BA$, $\theta$ may be arbitrarily close to zero.
On the other hand, if we assume that $\BA$ is constant and that either
$\GD = \emptyset$ or $\GN=\emptyset$, \eqref{eq_elliptic_regularity}
holds true with $\theta > 1/2$. Finally, if we further assume that
$\Omega$ is convex, \eqref{eq_elliptic_regularity} does hold true with $\theta = 1$.
We refer the reader to \cite{grisvard_1985a} for these last two statements.

As respectively constructed for instance in
\cite{ern_guermond_2017a,ern_gudi_smears_vohralik_2022a},
there exist (quasi-)interpolation operators
$\CI_h: L^2(\Omega) \to \CP_p(\CT_h) \cap H^1_{\GD}(\Omega)$
and
$\CJ_h: \BL^2(\Omega) \to \BCP_p(\CT_h) \cap \BX_{\GN}(\ddiv,\Omega)$
such that
\begin{subequations}
\label{eq_interpolation}
\begin{equation}
\label{eq_interpolation_H1}
\sum_{K \in \CT_h}
\left (
h_K^{-2} \|v-\CI_h v\|_K + \|\grad (v-\CI_h v)\|_K
\right )
\leq
Ch^{2\theta}|\grad v|_{H^\theta(\CT_h)}^2
\end{equation}
and
\begin{equation}
\label{eq_interpolation_Hdiv}
\sum_{K \in \CT_h}
\left (
\|\bw-\CJ_h \bw\|_K + h_K^2 \|\div (\bw-\CI_h \bw)\|_K
\right )
\leq
C \left (h^{2\theta}|\bw|_{H^\theta(\CT_h)}^2 + h^2\|\div \bw\|_\Omega\right )
\end{equation}
\end{subequations}
for all $v \in H^1_{\GD}(\Omega)$ and $\bw \in \BX_{\GN}(\ddiv,\Omega)$.

\begin{theorem}[Approximation factor]
\label{theorem_approximation_factor}
The following estimate holds true:
\begin{equation}
\label{eq_bound_gba}
\gba
\leq
C\gst
\left (\frac{\omega\ell}{\vel_{\min}}\right )^{1-\theta}
\left (\frac{\omega   h}{\vel_{\min}}\right )^{\theta}.
\end{equation}
\end{theorem}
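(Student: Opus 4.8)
The plan is to exploit the simplification $\GR=\emptyset$, under which $\tgbag=\tgbad=0$ and hence $\gba=2\cgba=2(\cgbag^2+\cgbad^2)^{1/2}$, so that it suffices to estimate the two conforming factors separately. Since $\pihg$ and $\pihd$ are best approximations in the norms $\|{\cdot}\|_{\dagger,1,\CT_h}$ and $\|{\cdot}\|_{\dagger,\ddiv,\CT_h}$, and since $\CI_h$ and $\CJ_h$ map into the relevant conforming spaces, I would first replace the projections by the quasi-interpolants of \eqref{eq_interpolation}:
\begin{equation*}
\cgbag \leq \max_{\|\psi\|_{\mu,\Omega}=1}\|u_\psi^\star-\CI_h u_\psi^\star\|_{\dagger,1,\CT_h},
\qquad
\cgbad \leq \max_{\|\psi\|_{\mu,\Omega}=1}\|\BA\grad u_\psi^\star-\CJ_h(\BA\grad u_\psi^\star)\|_{\dagger,\ddiv,\CT_h}.
\end{equation*}
The task then reduces to bounding these interpolation errors in terms of $\|\psi\|_{\mu,\Omega}=1$ through the elliptic shift \eqref{eq_elliptic_regularity}.

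The second ingredient is a stability bound for the dual solution. Because $\GR=\emptyset$ the form $b$ is Hermitian, so $u_\psi^\star$ equivalently solves $b(u_\psi^\star,w)=\omega(\mu\psi,w)_\Omega$ for all $w$, and the inf-sup identity \eqref{eq_well_posed}, which is symmetric for Hermitian forms, yields $\enorm{u_\psi^\star}_{\omega,\Omega}\leq\gst$. Reading off the strong form $\div(\BA\grad u_\psi^\star)=-\omega^2\mu u_\psi^\star-\omega\mu\psi$ and using $\omega\|u_\psi^\star\|_{\mu,\Omega}\leq\gst$ together with $\gst\geq1$ (a consequence of \eqref{eq_continuity}), I would obtain $\|\div(\BA\grad u_\psi^\star)\|_\Omega\leq C\omega\gst$, up to a $\mu$-weight. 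Since $\BA\grad u_\psi^\star\in\BX_{\GN}(\ddiv,\Omega)$ when $\GR=\emptyset$, the shift \eqref{eq_elliptic_regularity} then controls $|\BA\grad u_\psi^\star|_{H^\theta(\CT_h)}$ and $\alpha_{\max}|\grad u_\psi^\star|_{H^\theta(\CT_h)}$ by $C\ell^{1-\theta}\omega\gst$, again up to a $\mu$-weight. The bookkeeping observation used throughout is that, after squaring, the weights $\alpha_K$ and $\mu_K$ carried by the two mesh norms, the strong form, and the elliptic shift recombine into the wave speeds $\vel_K^{-2}=\mu_K/\alpha_K$, whose worst value $\vel_{\min}^{-2}$ produces the factors of \eqref{eq_bound_gba}, the remaining contrast ratios being absorbed into $C$.

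The divergence-conforming factor is then straightforward: inserting the regularity bounds into \eqref{eq_interpolation_Hdiv} and using $h\leq\ell$ to absorb the divergence contribution $h^2\|\div(\BA\grad u_\psi^\star)\|_\Omega^2$ into the $h^{2\theta}$ term gives
\begin{equation*}
\cgbad^2\leq C\gst^2\left(\frac{\omega\ell}{\vel_{\min}}\right)^{2-2\theta}\left(\frac{\omega h}{\vel_{\min}}\right)^{2\theta},
\end{equation*}
which is exactly the squared target. For $\cgbag$ I would split $\max(1,\omega^2h_K^2/\vel_K^2)\leq 1+\omega^2h_K^2/\vel_K^2$, so that $\|u_\psi^\star-\CI_h u_\psi^\star\|_{\dagger,1,\CT_h}^2$ decomposes into a diffusion part, handled exactly as above via \eqref{eq_interpolation_H1}, and a mass part $B=\omega^2\|u_\psi^\star-\CI_h u_\psi^\star\|_{\mu,\CT_h}^2$.

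The mass part is the main obstacle. The convergent interpolation bound $\|u_\psi^\star-\CI_h u_\psi^\star\|_{\CT_h}\leq Ch^{1+\theta}|\grad u_\psi^\star|_{H^\theta(\CT_h)}$ yields, after recombining the $\mu$-weight, a value of $B$ that is bounded by $(\omega h/\vel_{\min})^2$ times the target, which is too large in the high-frequency regime $\omega h/\vel_{\min}\gtrsim1$. I would remedy this with a second, crude bound $B\leq C\omega^2\|u_\psi^\star\|_{\mu,\Omega}^2\leq C\gst^2$, coming from the $L^2$-stability of $\CI_h$ and $\omega\|u_\psi^\star\|_{\mu,\Omega}\leq\gst$. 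Because $\ell\geq h$ forces the target to dominate $\gst^2(\omega h/\vel_{\min})^2$, taking the minimum of the two bounds shows that $B$ is controlled by the target in every regime: the convergent bound is used when $\omega h/\vel_{\min}\leq1$ and the crude one when $\omega h/\vel_{\min}\geq1$. Collecting the estimates for $\cgbag$ and $\cgbad$ and recalling $\gba=2(\cgbag^2+\cgbad^2)^{1/2}$ then yields \eqref{eq_bound_gba}.
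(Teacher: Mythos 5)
Your proof is correct and follows essentially the same route as the paper's: replace the best-approximation projections by the quasi-interpolants of \eqref{eq_interpolation}, bound the dual solution through the inf-sup constant $\gst$, read $\div(\BA\grad u_\psi^\star)$ off the strong form, and apply the elliptic shift \eqref{eq_elliptic_regularity}, with the wave speeds $\vel_K$ absorbing the weights. The one point where you go beyond the paper is the treatment of the weight $\max(1,\omega^2h_K^2/\vel_K^2)$ in $\|{\cdot}\|_{\dagger,1,\CT_h}$ --- the paper's displayed computation silently drops it --- and your minimum of the convergent interpolation bound and the crude $L^2$-stability bound for the mass term is a clean way to close that small gap in all frequency regimes.
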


\begin{proof}
Fix $\psi \in L^2(\Omega)$. On the one hand, we have
\begin{align*}
\|u_\psi^\star-\CI_h u_\psi^\star\|_{\dagger,1,\CT_h}^2
&=
\sum_{K \in \CT_h}
\left (
\frac{\alpha_K}{h_K^2} \|u_\psi^\star-\CI_h u_\psi^\star\|_K^2
+
\|\grad(u_\psi^\star-\CI_h u_\psi^\star)\|_{\BA,K}^2
\right )
\\
&\leq
\alpha_{\max}
\sum_{K \in \CT_h}
\left (
h_K^{-2}\|u_\psi^\star-\CI_h u_\psi^\star\|_K^2
+
\|\grad(u_\psi^\star-\CI_h u_\psi^\star)\|_{K}^2
\right )
\\
&\leq
C \alpha_{\max} h^{2\theta}|u_\psi^\star|_{H^{1+\theta}(\CT_h)}^2
\leq
C \frac{(\ell^{1-\theta}h^{\theta})^2}{\alpha_{\min}}
\|\div(\BA\grad u_\psi^\star)\|_{\Omega}^2
\end{align*}
due to \eqref{eq_interpolation_H1} and \eqref{eq_elliptic_regularity}.
Similarly, if we let $\bsig_\psi^\star \eq \BA\grad u_\psi^\star$, we have
\begin{align*}
\|\bsig^\star_\psi-\CJ_h\bsig^\star_\psi\|_{\dagger,\ddiv,\CT_h}^2
&=
\sum_{K \in \CT_h}
\left (
\|\bsig_\psi^\star-\CJ_h \bsig_\psi^\star\|_{\BA^{-1},K}^2
+
\frac{h_K^2}{\alpha_K}\|\div(\bsig_\psi^\star-\CJ_h \bsig_\psi^\star)\|_K^2
\right )
\\
&\leq
\frac{1}{\alpha_{\min}}
\sum_{K \in \CT_h}
\left (
\|\bsig_\psi^\star-\CJ_h \bsig_\psi^\star\|_K^2
+
h_K^2\|\div(\bsig_\psi^\star-\CJ_h \bsig_\psi^\star)\|_K^2
\right )
\\
&\leq
C
\frac{1}{\alpha_{\min}}
\left (
h^{2\theta} |\BA\grad u_\psi^\star|_{H^s(\CT_h)}^2
+
h^{2} \|\div(\BA\grad u_\psi^\star)|_{\Omega}^2
\right )
\end{align*}
where we employed \eqref{eq_interpolation_Hdiv} and \eqref{eq_elliptic_regularity}.
By definition of $u_\psi^\star$, we have
\begin{equation*}
\|\div(\BA\grad u_\psi^\star)\|_{\Omega}^2
=
\|\omega\mu \psi+\omega^2\mu u_\psi^\star\|_{\Omega}^2
\leq
C \mu_{\max} \omega^2
\left (
\|\psi\|_{\mu,\Omega}^2 + \omega^2\|u_\psi^\star\|_{\mu,\Omega}^2
\right )
\leq
C \gst^2 \mu_{\max} \omega^2 \|\psi\|_{\mu,\Omega}^2,
\end{equation*}
which leads to
\begin{equation*}
\|u_\psi^\star-\CI_h u_\psi^\star\|_{\dagger,1,\CT_h}
\leq
C \gst
\left (\frac{\omega\ell}{\vel_{\min}}\right )^{1-\theta}
\left (\frac{\omega h}{\vel_{\min}}\right )^{\theta}
\|\psi\|_{\mu,\Omega}
\end{equation*}
and
\begin{equation*}
\|\bsig_\psi^\star-\CJ_h\bsig_\psi^\star\|_{\dagger,\ddiv,\CT_h}
\leq
C\gst
\left (\frac{\omega\ell}{\vel_{\min}}\right )^{1-\theta}
\left (\frac{\omega h}{\vel_{\min}}\right )^{\theta}
\|\psi\|_{\mu,\Omega}.
\end{equation*}
Since they are valid for all $\psi \in L^2(\Omega)$,
these two estimates respectively enable to control
$\gbag$ and $\gbad$, leading to the bound on $\gba$
in \eqref{eq_bound_gba}.
\end{proof}

\begin{remark}[Frequency scaling]
In the setting considered here, it can be shown that
$\gst \sim \omega/\delta$, where $\delta$ is
the distance between $\omega$ and the closest resonant
frequency (the square-root of an eigenvalue). In this case,
under the assumption of full elliptic regularity, we obtain
\begin{equation*}
\gst \leq C \frac{\omega}{\delta} \frac{\omega h}{\vel_{\min}}.
\end{equation*}
This is consistent with previously obtained estimates,
see e.g. \cite{chaumontfrelet_vega_2022a}.
\end{remark}

\begin{remark}[Generalization to hanging nodes]
As can be seen by the proof of Theorem \ref{theorem_approximation_factor},
the approach proposed in this work does not need matching meshes, and
allows for hanging nodes as long as (quasi-)interpolation operators
satisfying \eqref{eq_interpolation} are available. For the full
elliptic regularity case where we can take $\theta=1$ in \eqref{eq_elliptic_regularity},
such interpolation operators are available for some families of meshes with hanging
nodes, see e.g. \cite[Lemma 12]{ainsworth_pinchedez_2002a}.
\end{remark}

\begin{remark}[Comparison with earlier estimates]
It is instructive to compare our results under minimal
regularity assumptions to the more standard approach
that requires more regularity. In this case, an approximation
factor similar to $\gbag$ is defined, but with an enhanced
norm including the normal trace of the gradient. This can
be seen, e.g., in respectively
\cite[Proposition 3.5]{melenk_parsania_sauter_2013a}
and
\cite[Definition 2.7]{sauter_zech_2015a}
for a priori and a posteriori error analysis.
If $p=1$ the estimate provided in
\cite[Theorem 4.8]{melenk_parsania_sauter_2013a}
for such an approximation factor has the same
scaling as the one derived here
(notice that the result in \cite{melenk_parsania_sauter_2013a}
is obtained under the assumption that $\gst \sim k^{1+\vartheta}$
for some $\vartheta \geq 0$ and that we can take $\theta=1$).
The results obtained in \cite{melenk_sauter_2010a}
and \cite{dorfler_sauter_2013a} for conforming discretizations
are also similar. We could also obtain sharp estimates for
high-order polynomials by using regularity splitting resulting
\cite{chaumontfrelet_nicaise_2020a,melenk_sauter_2010a}.
We refrain from doing so for the sake of brevity though.
\end{remark}
}

\end{document}